\definecolor{blue}{rgb}{0,0.0,0.9}
\def\d{{\rm d}}
\def\Id{{\rm Id}}
\def\ii{\textrm{i}}
\newtheorem{theorem}{Theorem}[section]
\newtheorem{lemma}{Lemma}[section]
\newtheorem{proposition}{Proposition}[section]
\newtheorem{remark}[proposition]{Remark}
\numberwithin{equation}{section}
\numberwithin{table}{section}
\numberwithin{figure}{section}
\begin{document}

\title[]
{Sharp convergence rates of time discretization for stochastic time-fractional PDEs subject to additive space-time white noise } 

\author[]{Max Gunzburger}
\address{Department of Scientific Computing, Florida State
University, Tallahassee, FL 32306, USA.} 
\email {\href{mailto:gunzburg@fsu.edu}{gunzburg{\it @}fsu.edu}}

\author[]{Buyang Li}
\address{Department of Applied Mathematics, 
The Hong Kong Polytechnic University, Hung Hom, Hong Kong.} 
\email {\href{mailto:buyang.li@polyu.edu.hk}{buyang.li{\it @}polyu.edu.hk}}

\author[]{Jilu Wang}
\address{Department of Scientific Computing, Florida State University, Tallahassee, FL 32306, USA.}
\curraddr{Department of Mathematics and Statistics, 
Mississippi State University, Mississippi State, 39762, USA.}
\email {\href{mailto:jwang13@fsu.edu}{jwang13{\it @}fsu.edu}} 
\date{}

\keywords{stochastic partial differential equation, 
time-fractional derivative, 
space-time white noise, 
error estimates}

\subjclass[2010]{60H15, 60H35, 65M12}


\begin{abstract}
The stochastic time-fractional equation 
$\partial_t \psi -\Delta\partial_t^{1-\alpha} \psi = f + \dot W$ 
with space-time white noise $\dot W$ is discretized in time by a backward-Euler convolution quadrature  
for which the sharp-order error estimate 
\[
({\mathbb E}\|\psi(\cdot,t_n)-\psi_n\|_{L^2(\mathcal{O})}^2)^{\frac{1}{2}}=O(\tau^{\frac{1}{2}-\frac{\alpha d}{4}})
\]
is established for $\alpha\in(0,2/d)$, where $d$ denotes the spatial dimension, $\psi_n$ the approximate solution at the $n^{\rm th}$ time step, and $\mathbb{E}$ the expectation operator. 
In particular, the result indicates sharp convergence rates of numerical solutions for 
both stochastic subdiffusion and diffusion-wave problems in one spatial dimension. 
Numerical examples are presented to illustrate the theoretical analysis. \\[5pt]
\end{abstract}

\maketitle

\thispagestyle{headings}
\markright{\small Mathematics of Computation (accepted)\hfill }

\section{Introduction}\label{sec:intro}
We are interested in the convergence of numerical methods for solving the stochastic time-fractional partial differential equation (PDE) problem
\begin{align}\label{Frac-SPDE} 
\left\{\begin{aligned}
&\partial_t \psi(x,t)-\Delta \partial_t^{1-\alpha}\psi(x,t) =   
f(x,t)+\dot W(x,t)    
&& (x,t)\in \mathcal{O}\times {\mathbb R}_+ \\
&\psi(x,t)=0 && (x,t)\in \partial\mathcal{O}\times {\mathbb R}_+  \\
&\psi(x,0)=\psi_0(x) && x\in \mathcal{O},
\end{aligned}\right.
\end{align} 
where $\mathcal{O}\subset\mathbb{R}^d$, $d\in\{1,2,3\}$, denotes a bounded region with Lipschitz boundary $\partial\mathcal{O}$, $f(x,t)$ a given deterministic source function, $\psi_0(x)$ given deterministic initial condition, and $\dot W(x,t)$ a space-time white noise, i.e., the time derivative of a cylindrical Wiener process in $L^2(\mathcal{O})$. The underlying probability sample space for the stochastic noise is denoted by $\Omega$. The operator $\Delta: D(\Delta)\rightarrow L^2(\mathcal{O})$ 
denotes the Laplacian, defined on the domain 
$$
D(\Delta)=\{\phi\in H^1_0(\mathcal{O}): \Delta \phi\in L^2(\mathcal{O})\} ,
$$ 
and $\partial_t^{1-\alpha}\psi$ denotes the left-sided Caputo fractional time derivative of order $1-\alpha\in(-1,1)$, defined by (c.f. \cite[pp. 91]{KST})
\begin{align} \label{Caputo}
\partial_t^{1-\alpha} \psi(x,t) 
:= \left\{
\begin{aligned}
&\frac{1}{\Gamma(\alpha)}  \int_0^t (t-s)^{\alpha-1} \frac{\partial\psi(x,s)}{\partial s} \d s 
&&\mbox{if}\,\,\, \alpha\in(0,1], \\
&\frac{1}{\Gamma(\alpha-1)} 
\int_0^t (t-s)^{\alpha-2}\psi(x,s)\d s 
&&\mbox{if}\,\,\, \alpha\in(1,2), 
\end{aligned}
\right.
\end{align}
where $\Gamma(s):=\int_0^\infty t^{s-1}e^{-t}\d t$ denotes Euler's gamma function. 

Problem  \eqref{Frac-SPDE} arises, e.g, when considering heat transfer in a material with thermal memory based on a modified Fick's law \cite{Choi-MacCamy-1989,Gurtin-PipKin-1968,MacCamy1977,Nunziato1971}, subject to stochastic noise \cite{ClementDaPrato,KP,MijenaNane}. 
For the model \eqref{Frac-SPDE}, both the fractional time derivative and the stochastic process forcing result in solution having low regularity. Hence, the numerical approximation of such problems and the corresponding numerical analysis are very challenging. 
By defining $\partial_t^{\alpha} \psi(x,t):=\partial_t^{\alpha-1}\partial_t \psi(x,t)$ for $\alpha\in(1,2)$ and using the identity 
\begin{align}
\partial_t^{\alpha-1}\partial_t^{1-\alpha}\psi(x,t)
=
\left\{
\begin{aligned}
&\psi(x,t)-\psi(x,0) &&\mbox{if}\,\,\,\alpha\in(0,1),\\
&\psi(x,t) &&\mbox{if}\,\,\,\alpha\in(1,2),
\end{aligned}
\right. 
\end{align}
applying $\partial_t^{\alpha-1}$ to \eqref{Frac-SPDE} yields another formulation of \eqref{Frac-SPDE}: 
\begin{align}
\partial_t^\alpha \psi(x,t) - \Delta\psi(x,t) 
=
\left\{
\begin{aligned}
&\partial_t^{\alpha-1}( f(x,t)+\dot W(x,t) ) 
-\Delta\psi(x,0) &&\mbox{if}\,\,\,\alpha\in(0,1),\\
& f(x,t)+\dot W(x,t) 
-\Delta\psi(x,0) &&\mbox{if}\,\,\,\alpha=1,\\
&\partial_t^{\alpha-1}(f(x,t) +\dot W(x,t) )  &&\mbox{if}\,\,\,\alpha\in(1,2) ,
\end{aligned}
\right. 
\end{align}
where the case $\alpha=1$ can be verified directly from \eqref{Frac-SPDE}. For the sake of clarity, we focus on only one of the equivalent problems, namely 
\eqref{Frac-SPDE}.

The solution of \eqref{Frac-SPDE} can be decomposed into the solution of the deterministic problem 
\begin{align}\label{Deter-SPDE2} 
\left\{
\begin{array}{ll}
\partial_t v(x,t) -\Delta \partial_t^{1-\alpha}v(x,t) = f (x,t)   &  (x,t)\in \mathcal{O}\times {\mathbb R}_+   \\
v(x,t)=0  & (x,t)\in \partial\mathcal{O}\times {\mathbb R}_+     \\
v(x,0)=\psi_0(x)   & x\in \mathcal{O}     
\end{array}\right.
\end{align}
plus the solution of the stochastic problem
\begin{align}\label{Frac-SPDE2} 
\left\{\begin{aligned}
&\partial_t u(x,t) -\Delta \partial_t^{1-\alpha} u(x,t) = \dot W(x,t) 
&& (x,t)\in \mathcal{O}\times {\mathbb R}_+ \\
&u(x,t)=0   &&(x,t)\in \partial\mathcal{O}\times {\mathbb R}_+ \\ 
&u(x,0)=0   && x\in \mathcal{O}  .
\end{aligned}\right.
\end{align} 
The stability and convergence of numerical solutions of \eqref{Deter-SPDE2} have been widely studied \cite{CCP2007,CuestaLubichPalencia:2006,LubichSloanThomee:1996,McLeanMustapha:2015,MustaphaSchotzau2014}. For example, if $f$ is smooth in time then numerical methods of up to order $6$ are available for approximating the solution of \eqref{Deter-SPDE2} 
and its equivalent formulations \cite{JinLazarovZhouSISC2016,JinLiZhou-BDF,JinLiZhou-CN,JLZ-MaxLp,LLSWZ2018,LWZ2017,LubichSloanThomee:1996,McLeanMustapha:2015}. In particular, the convolution quadrature generated by the backward Euler method yields a first-order convergence rate for solving \eqref{Deter-SPDE2}. 

In this work, we focus on numerical approximation of the stochastic time-fractional PDE \eqref{Frac-SPDE2} with additive space-time white noise based on the convolution quadrature generated by the backward Euler method. In the case $\alpha\in(1,2)$ and $d=1$, rigorous error estimates for numerical solutions of \eqref{Frac-SPDE2} are carried out in \cite{KP} for the case of additive Gaussian noise in the general $Q$-Wiener process setting. For a space-time white noise, an almost optimal-order convergence rate for the time discretization error  
\begin{align}\label{Estimate-epsilon}
({\mathbb E}\|u(\cdot,t_n)-u_n\|_{L^2(\mathcal{O})}^2)^{\frac{1}{2}}=O(\tau^{\frac{1}{2}-\frac{\alpha }{4}-\epsilon})
\end{align} 
is proved \cite[Remark 4.7, with $\rho=\alpha$]{KP}) for arbitrarily small $\epsilon>0$, where $u(\cdot,t_n)$ and $u_n$ denote the mild solution and numerical solution of \eqref{Frac-SPDE2} at time $t_n$, respectively. The estimate \eqref{Estimate-epsilon} is ``almost optimal'' in the sense that the optimal approximation theoretic error estimate for functions having the regularity of the solution $u$ does not have the $\epsilon$ term in the exponent.  
We are not aware of any rigorous numerical analyses in the case $\alpha\in(0,1)$. 
In the case $\alpha=1$, error estimates for time discretization of the stochastic PDE \eqref{Frac-SPDE2} are proved in \cite{GyongyNualart1997} and \cite{AllenNovoselZhang1998,DuZhang2002,Gyongy1999,Yan} for Rothe's method and the backward Euler method, respectively, with different spatial discretization methods. In particular, the convergence rate $O(\tau^{\frac14})$ in time was proved, which corresponds to $\epsilon=0$ in \eqref{Estimate-epsilon}. Some modern references on numerical analysis for stochastsic PDEs in the case $\alpha=1$ include \cite{GWZ-2014,Jentzen-Kloeden-2016,Lord-Powell-Shardlow-2014,Zhang-Karniadakis-2017}.  

The error estimate \eqref{Estimate-epsilon} is consistent with the H\"older continuity of the solution $u\in C^{\gamma}([0,T];L^2(\Omega;L^2(\mathcal{O})))$ and the pathwise $\gamma$-H\"older continuity, with arbitrary $\gamma\in(0,\frac12-\frac{\alpha d}{4})$; see Appendix \ref{Append} or \cite[Corollary 1]{MijenaNane}.
The aim of this paper is to prove, for general $d$-dimensional domains, the sharper order convergence estimate 
\begin{align}\label{Estimate-e}
({\mathbb E}\|u(\cdot,t_n)-u_n\|_{L^2(\mathcal{O})}^2)^{\frac{1}{2}}=O(\tau^{\frac{1}{2}-\frac{\alpha d}{4}})\qquad \mbox{ $\alpha\in(0,2/d)$, \, $d\in\{1,2,3\}$} 
\end{align}
for time discretization of the stochastic PDE \eqref{Frac-SPDE2}, where by ``sharp'' we mean that we are able to obtain an approximation theoretic convergence rate that is consistent with respect to the regularity of the solution in time. This estimate is achieved via a more delicate analysis of the resolvent operator by using its Laplace transform representation. Our result covers both subdiffusion and diffusion-wave cases in one-dimensional spatial domains and, for the subdiffusion case, multi-dimensional domains. 

The rest of the paper is organized as follows. In Section \ref{The main results}, we present the backward-Euler convolution quadrature scheme we use to determine approximate solutions of the stochastic time-fractional PDE \eqref{Frac-SPDE2} and then state our main theoretical results. In Section \ref{Proof of Theorem}, we derive an integral representation of the numerical solution for which we prove sharp convergence rate results for the approximate solution. Numerical results are given in Section \ref{Numerical examples} to illustrate the theoretical analyses. 

Throughout this paper, we denote by $C$, with/without a subscript, a generic constant independent of $n$ and $\tau$ which could be different at different occurrences.

\section{The main results}
\label{The main results}
In this section, we describe the time discretization scheme we use for determining approximate solutions of the stochastic time-fractional PDE \eqref{Frac-SPDE} and state our main results about the convergence rate of the numerical solutions.  

\subsection{Mild solution of the stochastic PDE} 

Let $\phi_j(x)$, $j=1,2,\dots$, denote the $L^2$-norm normalized eigenfunctions of the Laplacian operator $-\Delta$ corresponding to the eigenvalues $\lambda_j$, $j=1,2,\dots,$ arranged in nondecreasing order. 
The cylindrical Wiener process on $L^2(\mathcal{O})$ can be represented as 
(cf. \cite[Proposition 4.7, with $Q=I$ and $U_1$ denoting some negative-order Sobolev space]{PratoZabczyk2014})
\begin{align}\label{white-noise} 
W(x,t) =\sum_{j=1}^\infty \phi_j(x) W_j(t)  
\end{align} 
with independent one-dimensional Wiener processes $W_j(t)$, $j=1,2,\dots$. 

In the case $\psi_0=0$, the solution of the deterministic problem \eqref{Deter-SPDE2} can be expressed by (via Laplace transform, cf. \cite[(3.11) and line 4 of page 12]{LubichSloanThomee:1996}) 
\begin{align}\label{Deter-sol-repr}
v(\cdot,t) = \int_0^t  E(t-s) f(\cdot,s)\d s ,
\end{align}
where the operator $E(t):L^2(\mathcal{O})\rightarrow L^2(\mathcal{O})$ is given by 
\begin{equation}\label{eqn:EF}
E(t) \phi:=\frac{1}{2\pi {\rm i}}\int_{\Gamma_{\theta,\kappa}}e^{zt} z^{\alpha-1} (z^\alpha-\Delta )^{-1}\phi\, \d z  \quad
\forall\, \phi\in L^2(\mathcal{O}) ,
\end{equation}
with integration over a contour $\Gamma_{\theta,\kappa}$ on the complex plane, 
\begin{align}\label{contour-Gamma}
  \Gamma_{\theta,\kappa} 
  &\, =\left\{z\in \mathbb{C}: |z|=\kappa  , |\arg z|\le \theta\right\}\cup
  \{z\in \mathbb{C}: z=\rho e^{\pm {\rm i}\theta}, \rho\ge \kappa \}  \nonumber\\
  &=:
\Gamma_{\theta,\kappa}^\kappa+\Gamma_{\theta,\kappa}^\theta . 
\end{align}
The angle $\theta$ above can be any angle such that $\pi/2<\theta<\min(\pi,\pi/\alpha)$ 
so that, for all $z$ to the right of $\Gamma_{\theta,\kappa}$ in the complex plane,  
$z^\alpha\in\Sigma_{\alpha\theta}:=\{z\in\mathbb{C}\backslash \{0\}:|\arg z|\le\alpha\theta \}$ with $\alpha\theta<\pi$.

Correspondingly, the mild solution of \eqref{Frac-SPDE2} is define as (cf. \cite{MijenaNane} and \cite[Proposition 2.7]{KP})  
\begin{align}
u(\cdot,t)&= 
\int_0^t  E(t-s)\d W(\cdot,s) \label{Mild-sol-} \\
&
= \sum_{j=1}^\infty \int_0^t E(t-s)\phi_j\d W_j(s)  . \label{Mild-sol}
\end{align}
This mild solution is well defined in $C^{\gamma}([0,T];L^2(\Omega;L^2(\mathcal{O})))$ for arbitrary $\gamma\in(0,\frac12-\frac{\alpha d}{4})$; see Appendix \ref{Append}.

\subsection{Convolution quadrature} 

Let $\{t_n=n\tau\}_{n=0}^N$ denote a uniform partition of the interval $[0,T]$ with a time step size $\tau = T/N$, and let $u^n=u(x,t_n)$. Under the zero initial condition, the Caputo fractional time derivative $\partial_t^{1-\alpha} u(x,t_n)$ can be discretized by the backward-Euler convolution quadrature \cite{Lubich:1986} (also known as Gr\"unwald-Letnikov approximation, cf. \cite{CuestaLubichPalencia:2006}) 
\begin{align}
\bar\partial_\tau^{1-\alpha} u_n
=\frac{1}{\tau^{1-\alpha} } \sum_{j=0}^n b_{n-j}u_j ,\quad n=0,1,2,\dots,N, 
\end{align} 
where $b_j$, $j=0,1,2,\dots N$, are the coefficients in the power series expansion 
\begin{align}
(1-\zeta)^{1-\alpha}=\sum_{j=0}^\infty b_{j}\zeta^j  .
\end{align} 
Here, $1-\zeta$ is the characteristic function of the backward-Euler method and 
we set 
\begin{align}\label{definition-d}
\delta_\tau(\zeta)= \frac{1-\zeta}{\tau}  \quad \textrm{for }\zeta\in {\mathbb C}\backslash[1,\infty) .
\end{align}

For any given sequence $\{v_n\}_{n=0}^\infty\in \ell^2(L^2(\mathcal{O}))$, we denote 
\begin{align}\label{gener-func}
\widetilde v(\zeta)=\sum_{n=0}^\infty v_n\zeta^n \quad \textrm{for }\zeta\in {\mathbb D}
\end{align} 
which is referred to as the generating function of the sequence $\{v_n\}_{n=0}^\infty$ (see \cite{LubichSloanThomee:1996}). 
Clearly, $\widetilde v$ is an $L^2(\mathcal{O})$-valued analytic function in the unit disk ${\mathbb D}$ and the limit 
$$
\widetilde v(e^{i\theta})=\lim_{r\rightarrow 1_-}\widetilde v(re^{i\theta})
$$ 
exists in $L^2(0,2\pi;L^2(\Omega))$. 
Then, we have
\begin{equation}\label{generate-dv}
\begin{aligned}
\sum_{n=0}^\infty (\bar\partial_\tau^{1-\alpha} v_n)\zeta^n
&=\sum_{n=0}^\infty  \frac{1}{\tau^{1-\alpha}} \sum_{j=0}^n b_{n-j}v_j\zeta^n
\\&=(\delta_\tau(\zeta))^{1-\alpha}\sum_{j=0}^\infty v_j\zeta^j 
=(\delta_\tau(\zeta))^{1-\alpha}\widetilde v(\zeta) .
\end{aligned} 
\end{equation}

\subsection{Time-stepping scheme and main theorem}

With the notations introduced in the last subsection, we discretize the fractional-order derivative $\partial_t^{1-\alpha}$ in \eqref{Frac-SPDE2} by using convolution quadrature in time to obtain  
\begin{equation}\label{CQ-scheme2}
\frac{u_n -u_{n-1}}{\tau} -\Delta \bar\partial_\tau^{1-\alpha} u_n 
= \frac{W(\cdot,t_n)-W(\cdot,t_{n-1})}{\tau} .
\end{equation}
Equivalently, $u_n$ can be expressed as
\begin{equation}\label{CQ-scheme-}
\begin{aligned}
u_n
=& \big(\Id- \tau^{\alpha}  b_0   \Delta \big)^{-1} 
u_{n-1}+  \tau^{\alpha} \sum_{j=0}^{n-1} b_{n-j}\Delta\big(\Id- \tau^{\alpha}  b_0   \Delta \big)^{-1}  u_j   \\
&\qquad\qquad +\big(\Id- \tau^{\alpha}  b_0   \Delta \big)^{-1}  \big(W(\cdot,t_n)-W(\cdot,t_{n-1}) \big)  \\
=& \big(\Id- \tau^{\alpha}  b_0   \Delta \big)^{-1} 
u_{n-1}+  \tau^{\alpha} \sum_{j=0}^{n-1} b_{n-j}\Delta\big(\Id- \tau^{\alpha}  b_0   \Delta \big)^{-1}  u_j \\
&\qquad\qquad+\sum_{j=1}^\infty \big(W_j(t_n)-W_j(t_{n-1}) \big)  \big(1+ \tau^{\alpha}  b_0   \lambda_j \big)^{-1}  \phi_j ,
\end{aligned} 
\end{equation} 
where $\Id$ denotes the identity operator. 

The main result of this paper is the following theorem. 

\begin{theorem}\label{MainTHM}
Let $\alpha\in(0,2/d)$ with $d\in\{1,2,3\}$. Then, for each $n=1,2,\dots,N,$ the numerical solution $u_n$ given by \eqref{CQ-scheme2} is well defined in $L^2(\Omega;L^2(\mathcal{O}))$ and converges to the mild solution $u(\cdot,t_n)$ with sharp order of convergence, i.e., we have 
\begin{align}\label{main-estimate}
\max_{1\le n\le N} \, \bigg({\mathbb E}\,\|u(\cdot,t_n)-u_n\|_{L^2(\mathcal{O})}^2\bigg)^{\frac12}
\le C\tau^{\frac12-\frac{\alpha d}{4}} ,
\end{align}
where ${\mathbb E}$ denotes the expectation operator and the constant $C$ is independent of $T$. 
\end{theorem}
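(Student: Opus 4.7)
The plan is to first derive, for the fully discrete scheme \eqref{CQ-scheme2}, an integral representation of $u_n$ that mirrors the mild-solution formula \eqref{Mild-sol}. Taking the generating function on both sides of \eqref{CQ-scheme2} and using identity \eqref{generate-dv} gives
\[
\widetilde u(\zeta) = \delta_\tau(\zeta)^{\alpha-1}\bigl(\delta_\tau(\zeta)^\alpha-\Delta\bigr)^{-1}\widetilde g(\zeta),
\]
where $\widetilde g$ is the generating function of the noise increments $\tau^{-1}(W(\cdot,t_n)-W(\cdot,t_{n-1}))$. Recovering $u_n$ by Cauchy's formula on a circle $|\zeta|=e^{-\tau\kappa}$ and changing variables $\zeta=e^{-z\tau}$ deforms the integration onto the image of $\Gamma_{\theta,\kappa}$; this yields a discrete kernel $E_\tau(t)$ obtained from \eqref{eqn:EF} by replacing $z$ and $z^\alpha$ with $\delta_\tau(e^{-z\tau})$ and $\delta_\tau(e^{-z\tau})^\alpha$, respectively. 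Combining with the fact that $W(\cdot,t_n)-W(\cdot,t_{n-1})=\sum_j\phi_j\int_{t_{n-1}}^{t_n}dW_j(s)$, one then writes $u_n=\sum_j\int_0^{t_n}F_n(s)\phi_j\,dW_j(s)$ for an explicit operator-valued kernel $F_n$ built from $E_\tau$ with piecewise constant time-quantization.

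With both $u(\cdot,t_n)$ and $u_n$ represented as stochastic integrals against the same family of Brownian motions, the It\^o isometry and Parseval's identity in $L^2(\mathcal O)$ reduce the mean-square error to
\[
\mathbb E\|u(\cdot,t_n)-u_n\|_{L^2(\mathcal O)}^2
= \sum_{j=1}^\infty\int_0^{t_n}\bigl|\widehat e_n(s,\lambda_j)\bigr|^2\,ds,
\]
where $\widehat e_n(s,\lambda)$ is the scalar kernel obtained by replacing $-\Delta$ by $\lambda$ in the difference $E(t_n-s)-F_n(s)$ under the contour representation. The task becomes a purely deterministic estimate on $\widehat e_n(s,\lambda)$ that must be sharp in $\tau$, uniform in $\lambda$, and summable against the Weyl asymptotic $\lambda_j\asymp j^{2/d}$.

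The deterministic estimate is the technical core. On $\Gamma_{\theta,\kappa}$ I would expand $z_\tau:=\delta_\tau(e^{-z\tau}) = z + O(\tau z^2)$ in a region where $|z\tau|\le c<1$, combine this with the sectorial bound $|z_\tau^\alpha+\lambda|\gtrsim |z|^\alpha+\lambda$, and split the consistency error
\[
e^{zt_n}z^{\alpha-1}(z^\alpha+\lambda)^{-1} - e^{z_\tau t_n}z_\tau^{\alpha-1}(z_\tau^\alpha+\lambda)^{-1}
\]
into contributions from the exponential factor, the numerator $z^{\alpha-1}$, and the resolvent. Choosing the radius $\kappa\asymp t_n^{-1}$ to exploit the decay $e^{-c|z|t_n}$ on the rays of $\Gamma_{\theta,\kappa}$, and also the quantization error arising from replacing $\int_{t_{n-1}}^{t_n}$ by $\tau$ times an endpoint evaluation, produces a bound of the form $|\widehat e_n(s,\lambda)|\le C\tau\int_{\Gamma_{\theta,\kappa}}e^{-c|z|(t_n-s)}|z|^\alpha|z^\alpha+\lambda|^{-1}\,|dz|$ on the bulk contour, with a matching, explicit endpoint contribution from $\Gamma_{\theta,\kappa}^\kappa$.

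Substituting into the sum, performing the $s$-integral, and scaling $z=\lambda^{1/\alpha}\xi$ on the relevant range gives $\int_0^{t_n}|\widehat e_n(s,\lambda)|^2ds\le C\tau^{2}\lambda^{-2+1/\alpha}$ modulo tail corrections; summing against Weyl's law, the resulting series $\sum_j\lambda_j^{-2+1/\alpha}$ converges iff $2-1/\alpha>d/2$, which together with a rebalancing of the $\tau^2$ prefactor by the high-frequency tail (where the naive estimate fails and one must use only the bound $|\widehat e_n|\lesssim|z|^{\alpha-1}|z^\alpha+\lambda|^{-1}$) delivers exactly the rate $\tau^{1-\alpha d/2}$ under $\alpha<2/d$. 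The same isometry applied to $F_n$ alone shows, as a byproduct, that $u_n\in L^2(\Omega;L^2(\mathcal O))$. The main obstacle I anticipate is the \emph{sharpness}: the straightforward bound $|z_\tau-z|\le C\tau|z|^2$ forces an $\epsilon$-loss as in the estimate recalled from the introduction, and removing it demands tracking the cancellation in $z_\tau^{\alpha-1}(z_\tau^\alpha+\lambda)^{-1}-z^{\alpha-1}(z^\alpha+\lambda)^{-1}$ together with a $\lambda$-dependent choice of the truncation radius separating the low- and high-frequency regimes, so that no logarithmic or arbitrarily small power of $\tau$ is lost when the $j$-sum is finally taken.
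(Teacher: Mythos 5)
Your overall strategy coincides with the paper's: generating functions plus the substitution $\zeta=e^{-z\tau}$ and a contour deformation to get a discrete kernel $E_\tau$, the It\^o isometry to reduce everything to deterministic scalar kernel estimates indexed by $\lambda_j$, and a consistency bound of the form $C\tau|z|^\alpha(|z|^\alpha+\lambda)^{-1}$ for the difference of the continuous and discrete symbols (this is exactly the paper's Lemma \ref{ineq-f}, obtained by the triangle inequality from $|\delta_\tau(e^{-\tau z})^\alpha-z^\alpha|\le C\tau|z|^{\alpha+1}$ and the sectorial resolvent bound). Your worry about needing extra cancellation or a $\lambda$-dependent truncation radius to avoid an $\epsilon$-loss is unfounded: no cancellation beyond the triangle inequality is used, and the sharp rate comes out directly.

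There are, however, two concrete gaps. First, the final summation over eigenvalues as you describe it does not work. Your claimed per-mode bound $\int_0^{t_n}|\widehat e_n(s,\lambda)|^2\,ds\le C\tau^2\lambda^{-2+1/\alpha}$ and the ensuing convergence condition $2-1/\alpha>d/2$ are incompatible with $\alpha<2/d$ for $d=2,3$ (and only cover $\alpha>2/3$ for $d=1$), so the entire burden falls on the unexplained ``rebalancing by the high-frequency tail.'' The clean resolution, which you do not state, is to interchange the order: apply Cauchy--Schwarz on the contour, sum over $j$ \emph{first} at fixed $z$ using Weyl's law in the form
\begin{equation*}
\sum_{j=1}^\infty \Bigl(\frac{|z|^\alpha}{|z|^\alpha+\lambda_j}\Bigr)^2\le C|z|^{\alpha d/2},
\end{equation*}
and only then integrate $C\tau|z|^{\alpha d/2}|e^{zs}|^2$ over $z\in\Gamma_{\theta,\kappa}^{(\tau)}$ and $s\in(0,t_n)$, which yields $C\tau\cdot\tau^{-\alpha d/2}$ with no loss. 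This inequality (Lemma \ref{ineq-sum} in the paper) is the decisive quantitative ingredient and your proposal never isolates it. Second, your error decomposition is incomplete: the continuous kernel $E(t)$ integrates over the full contour $\Gamma_{\theta,\kappa}$ while the discrete kernel lives on the truncation $\Gamma_{\theta,\kappa}^{(\tau)}$ with $|\mathrm{Im}\,z|\le\pi/\tau$; the resulting tail term ${\mathcal H}_\tau$ (the part of the mild solution carried by $|\mathrm{Im}\,z|>\pi/\tau$) contributes at exactly the critical order $\tau^{1-\alpha d/2}$ and requires its own estimate (a weighted Cauchy--Schwarz with an exponent $\beta\in(\alpha d/2,1)$). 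The arc $\Gamma^{\kappa}_{\theta,\kappa}$ contribution you mention is a different, harmless piece. Likewise the noise-quantization term (replacing $\d W_j(s)$ by $\bar\partial_\tau W_j(s)\,\d s$) needs the explicit modulus-of-continuity bound $|1-e^{z(s-\xi)}|\le C\tau|z|$ inside the contour integral; you name this term but do not estimate it.
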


\section{Proof of Theorem \ref{MainTHM}}
\label{Proof of Theorem}
\subsection{The numerical solution in $L^2(\Omega;L^2(\mathcal{O}))$}

In this subsection, we show that the numerical solution is well defined in $L^2(\Omega;L^2(\mathcal{O}))$. To this end, we use the following estimate for the eigenvalues of the Laplacian operator.
For the simplicity of notations, we denote by $(\cdot,\cdot)$ and $\|\cdot\|$ the inner product and norm of $L^2(\mathcal{O})$, respectively. \medskip
 
\begin{lemma}[\cite{Laptev,LY}]\label{ineq-eigen} 
Let $\mathcal{O}$ denote a bounded domain in $\mathbb{R}^d$, $d\in\{1,2,3\}$. Suppose $\lambda_j$ denotes the $j^{\rm th}$ eigenvalue of the Dirichlet boundary problem for the Laplacian operator $-\Delta$ in $\mathcal{O}$. With $|\mathcal{O}|$ denoting the volume of $\mathcal{O}$, we have that 
\begin{align}
\lambda_j\ge \frac{C_d d}{d+2} j^{2/d} |\mathcal{O}|^{-2/d} 
\end{align}
for all $j\ge 1$, where $C_d=(2\pi)^2 B_d^{-2/d}$ and $B_d$ denotes the volume of the unit $d$-dimensional ball.  
\end{lemma}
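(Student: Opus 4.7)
The plan is to prove the Li--Yau trace inequality $\sum_{k=1}^{j}\lambda_k\ge \frac{C_d d}{d+2} j^{1+2/d}|\mathcal{O}|^{-2/d}$ first, and then extract the stated lower bound on $\lambda_j$ by monotonicity. Let $\phi_1,\phi_2,\dots$ denote an $L^2(\mathcal{O})$-orthonormal system of Dirichlet eigenfunctions. Extending each $\phi_k$ by zero to all of $\mathbb{R}^d$ keeps them in $H^1(\mathbb{R}^d)$ (because $\phi_k\in H^1_0(\mathcal{O})$), so we may use the Fourier transform $\widehat{\phi}_k(\xi)=(2\pi)^{-d/2}\int_{\mathcal{O}}\phi_k(x)e^{-{\rm i}x\cdot\xi}\,\d x$. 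Define
\begin{equation*}
F(\xi):=\sum_{k=1}^{j} |\widehat{\phi}_k(\xi)|^2.
\end{equation*}
By Parseval's identity applied to each $\phi_k$ one gets $\int_{\mathbb{R}^d}F(\xi)\,\d\xi=j$, and using $\widehat{\nabla\phi_k}(\xi)={\rm i}\xi\widehat{\phi}_k(\xi)$ together with the variational characterization of eigenvalues one obtains $\int_{\mathbb{R}^d}|\xi|^2 F(\xi)\,\d\xi=\sum_{k=1}^{j}\lambda_k$.

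Next I would establish the pointwise bound $F(\xi)\le (2\pi)^{-d}|\mathcal{O}|$ for every $\xi\in\mathbb{R}^d$. Writing $\psi_\xi(x):=\mathbf{1}_{\mathcal{O}}(x)e^{{\rm i}x\cdot\xi}$, one has $\widehat{\phi}_k(\xi)=(2\pi)^{-d/2}(\phi_k,\psi_\xi)_{L^2(\mathcal{O})}$, so Bessel's inequality applied to the orthonormal family $\{\phi_k\}$ in $L^2(\mathcal{O})$ gives $\sum_{k=1}^{\infty}|(\phi_k,\psi_\xi)|^2\le \|\psi_\xi\|_{L^2(\mathcal{O})}^2=|\mathcal{O}|$, which yields the desired pointwise cap on $F$.

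The core of the proof is then a bathtub-principle optimization: among all measurable $F\ge 0$ satisfying $F\le M:=(2\pi)^{-d}|\mathcal{O}|$ and $\int F=j$, the quantity $\int |\xi|^2 F\,\d\xi$ is minimized by the characteristic function of the ball $B_R$ of radius $R$ chosen so that $M\,|B_R|=j$, i.e.\ $R=2\pi(j/(B_d|\mathcal{O}|))^{1/d}$. A direct calculation of $\int_{B_R}|\xi|^2\,\d\xi=\frac{d B_d}{d+2}R^{d+2}$ and substitution then produces exactly the trace bound $\sum_{k=1}^{j}\lambda_k\ge \frac{C_d d}{d+2}\,j^{1+2/d}|\mathcal{O}|^{-2/d}$ with $C_d=(2\pi)^2 B_d^{-2/d}$.

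Finally, because the eigenvalues are arranged in nondecreasing order, $\lambda_j\ge \frac{1}{j}\sum_{k=1}^{j}\lambda_k$, and dividing the trace bound by $j$ yields the claimed estimate. I expect the main technical step to be the bathtub minimization argument together with the justification that $\widehat{\phi}_k$ is well-defined with the stated Bessel-type majorization; the rest of the argument reduces to Parseval identities and a straightforward computation of a moment integral over a ball.
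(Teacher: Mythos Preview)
Your proposal is correct: this is precisely the Li--Yau argument (the reference \cite{LY} in the lemma), with the Bessel--Parseval setup, the pointwise cap $F(\xi)\le (2\pi)^{-d}|\mathcal{O}|$, the bathtub minimization, and the final monotonicity step all in order. Note that the paper does not supply its own proof of this lemma---it is quoted from \cite{Laptev,LY}---so there is nothing to compare against beyond the cited sources, and your derivation reproduces that standard proof.
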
\medskip

\begin{lemma}\label{2-order-M} 
Under the assumptions of Theorem \ref{MainTHM}, the numerical solution given by \eqref{CQ-scheme-} is well defined in $L^2(\Omega;L^2(\mathcal{O}))$. 
\end{lemma} 

\noindent{\it Proof.}$\,\,$  
Clearly, if $u_j\in L^2(\Omega;L^2(\mathcal{O}))$ for $j=0,\dots,n-1$, then 
\begin{align}\label{numer-sol-1}
\big(\Id- \tau^{\alpha}  b_0   \Delta \big)^{-1} 
u_{n-1}+  \tau^{\alpha} \sum_{j=0}^{n-1} b_{n-j}\Delta\big(\Id- \tau^{\alpha}  b_0   \Delta \big)^{-1}  u_j\in L^2(\Omega;L^2(\mathcal{O})) .
\end{align} 
In view of \eqref{CQ-scheme-}, we only need to prove 
\begin{align}\label{numer-sol-def}
\sum_{j=1}^\infty \big(W_j(t_n)-W_j(t_{n-1}) \big)  \big(1+ \tau^{\alpha}  b_0   \lambda_j \big)^{-1}  \phi_j 
\in L^2(\Omega;L^2(\mathcal{O})) .
\end{align} 
In fact, we have 
\begin{align*}
&{\mathbb E}\bigg\|\sum_{j=\ell}^{\ell+m} 
\big(W_j(t_n)-W_j(t_{n-1}) \big)  \big(1+ \tau^{\alpha}  b_0 \lambda_j \big)^{-1}  \phi_j \bigg\|^2\\
&={\mathbb E} \sum_{j=\ell}^{\ell+m} 
|W_j(t_n)-W_j(t_{n-1}) |^2 \big(1+ \tau^{\alpha}  b_0   \lambda_j \big)^{-2} 
= \sum_{j=\ell}^{\ell+m} 
\tau \big(1+ \tau^{\alpha}  b_0   \lambda _j\big)^{-2} \\
&\le  Cb_0^{-2}\tau^{1-2\alpha} \sum_{j=\ell}^{\ell+m}    j^{-4/d}  \rightarrow 0\quad\mbox{as}\,\,\,\ell\rightarrow\infty .
\end{align*} 
Hence, for a fixed time step $\tau$, 
$$\sum_{j=1}^\ell \big(W_j(t_n)-W_j(t_{n-1}) \big)  \big(1+ \tau^{\alpha}  b_0   \lambda_j \big)^{-1}  \phi_j ,
\quad 
\ell=1,2,\dots$$ is a Cauchy sequence in $L^2(\Omega;L^2(\mathcal{O}))$. 
Consequently, \eqref{numer-sol-def} is proved.  In view of \eqref{CQ-scheme-} and \eqref{numer-sol-1}-\eqref{numer-sol-def}, 
the numerical solution $u_n$ is well defined in $L^2(\Omega;L^2(\mathcal{O}))$. 
\qed \medskip

\subsection{A technical lemma}

To prove the error estimate in Theorem \ref{MainTHM}, we need the following technical lemma. \medskip

\begin{lemma}\label{ineq-sum}
Let $\alpha\ge 0$ and $d\in\{1,2,3\}$. Then there exist constants $C$ and $C_\varphi$ such that  
\begin{align}
&
\sum_{j=1}^\infty \bigg(\frac{r^{\alpha}}{r^\alpha  + \lambda_j }\bigg)^2 
\le Cr^{\alpha d/2} \quad \forall\, r>0, \label{ineq-sum2} \\
&\bigg|\frac{1}{z  + \lambda_j }\bigg|
\le \frac{C_\varphi  }{|z|   + \lambda_j } 
\qquad \qquad
\forall\, z  \in \Sigma_{\varphi}\,\,\,\mbox{with}\,\,\, \varphi\in(0,\pi) , \label{ineq-sum-}
\end{align}
where the constant $C$ depends on the dimension $d$ and the volume of the domain $\mathcal{O}$, and $C_\varphi$ depends on the angle $\varphi\in(0,\pi)$.  
\end{lemma}
\begin{proof}
Clearly, Lemma \ref{ineq-eigen} implies $\lambda_j \ge C  j^{2/d}$, with a constant $C$ depending on the dimension $d$ and the volume of the domain $\mathcal{O}$. 

First, if $0<r< 1$, 
\begin{align} 
&
\sum_{j=1}^\infty \bigg(\frac{r^{\alpha}}{r^\alpha  + \lambda_j }\bigg)^2 
\le 
\sum_{j=1}^\infty  \frac{r^{2\alpha}}{Cj^{4/d} }  
\le Cr^{2\alpha}
\le Cr^{\alpha d/2} ,\quad\mbox{($d/2\le 2$ for $d=1,2,3$)}.
\end{align}

Second, if $r\ge 1$, by setting $M=\lfloor r^{\alpha d/2} \rfloor \ge 1$ to be the largest integer that does not exceed $r^{\alpha d/2}$, we have 
\begin{align*}
\sum_{j=1}^\infty \bigg(\frac{r^{\alpha}}{r^\alpha  + \lambda_j }\bigg)^2
&\le \sum_{j=1}^\infty \bigg(\frac{r^{\alpha}}{r^\alpha  + Cj^{2/d} }\bigg)^2 \\
&= \sum_{j=1}^{M+1} \bigg(\frac{r^{\alpha}}{r^\alpha  + Cj^{2/d} }\bigg)^2
  +\sum_{j=M+2}^{\infty} \bigg(\frac{r^{\alpha}}{r^\alpha  + Cj^{2/d} }\bigg)^2 
=:I_1+I_2.    
\end{align*}
It is easy to see that $I_1\le M+1\le 2M\le 2r^{\alpha d/2}$ and 
\begin{align*}
I_2
&\le \int_{r^{\alpha d/2}}^\infty \bigg(\frac{r^\alpha}{r^\alpha+Cs^{2/d}} \bigg)^2 \d s 
= C^{-d/2}r^{\alpha d/2}\int_{C^{d/2}}^\infty \bigg(\frac{1}{1+\xi^{2/d}} \bigg)^2 \d\xi 
\le Cr^{\alpha d/2},
\end{align*} 
where the equality follows by changing the variable $s=C^{-d/2}r^{\alpha d/2}\xi$. This proves \eqref{ineq-sum2} in the case $r\ge 1$.

Finally, for the point $\xi=-\lambda_j+0\ii$ in the complex plane, we have $|\xi|=\lambda_j$ and $|z-\xi|=|z+\lambda_j|$. 
By looking at the triangle with three vertices $z$, $0$, and $\xi$ with interior angles $\omega_z$, $\omega_0$, and $\omega_{\xi}$ at the three vertices, respectively, we have 
$$
\frac{|z-\xi|}{\sin(\omega_0)}
=\frac{|z|}{\sin(\omega_{\xi})}
=\frac{\lambda_j}{\sin(\omega_z)} .
$$
If $\omega_0\ge \pi/2$, then $|z-\xi|$ 
would be the length of the longest side of the triangle, i.e., 
$$
|z-\xi|\ge |z|\quad\mbox{and}\quad
|z-\xi|\ge \lambda_j 
$$
which immediately implies $$
|z-\xi|\ge \frac{1}{2}(|z|+ \lambda_j) .
$$
If $\omega_0\le \pi/2$, then the angle condition $|{\rm arg}(z)|<\varphi$ implies $\omega_0>\pi-\varphi$. Hence, we have 
$$
|z-\xi|=\frac{|z|\sin(\omega_0)}{\sin(\omega_{\xi})}
\ge |z| \sin(\varphi)
\quad
\mbox{and}
\quad
|z-\xi|=\frac{\lambda_j\sin(\omega_0)}{\sin(\omega_{z})}
\ge \lambda_j \sin(\varphi)
$$
which immediately implies $$
|z-\xi|\ge \frac{\sin(\varphi)}{2}(|z|+ \lambda_j) .
$$
In either case, we have \eqref{ineq-sum-}. 
This completes the proof of Lemma \ref{ineq-sum}. 
\end{proof}

\subsection{Solution representations} 

In this subsection, we derive a representation of the semidiscrete solution $u_n$ by means of the discrete analogue of the Laplace transform and generating function. 

Let $\Gamma_{\theta,\kappa}^{(\tau)}$ denote the truncated piece of the contour $\Gamma_{\theta,\kappa}$ defined by 
\begin{align}\label{trunc-contour}
\Gamma_{\theta,\kappa}^{(\tau)}:=
\{z\in\Gamma_{\theta,\kappa}\,\,:\,\,|\textrm{Im}(z)|\leq \pi/\tau\} .
\end{align}
For $\rho\in(0,1)$, let $\Gamma^{(\tau)}_\rho$ denote the segment of a vertical line defined by 
\begin{align}\label{contour-rho}
\Gamma^{(\tau)}_\rho:=
\{z=-\ln (\rho)/\tau+\ii y\,\,:\,\,y\in\mathbb{R} \textrm{ and } |y|\leq \pi/\tau \}.
\end{align}
The following technical lemma will be used in this and next subsections, where ${\rm arccot}(\cdot)$ denotes the inverse of the cotangent function ${\rm cot}: (0,\pi)\rightarrow\mathbb{R}$. 

\begin{lemma} \label{ineq-1} 
Let $\alpha\ge 0$ and $\theta\in\big(\frac{\pi}{2},{\rm arccot}(-\frac{2}{\pi})\big)$ be given, and let $\rho\in(0,1)$ be fixed, with $\delta_\tau(\zeta)$ defined in \eqref{definition-d}. Then, both $\delta(e^{-z\tau})$ and $(\delta(e^{-z\tau})^\alpha-\Delta)^{-1}$ are analytic with respect to $z$ in the region enclosed by 
$$
\mbox{$\Gamma^{(\tau)}_\rho$,\,\,\, $\Gamma^{(\tau)}_{\theta,\kappa}$,\,\,\, and the two lines 
$\mathbb{R}\pm\ii\pi/\tau$}
\quad\mbox{whenever}\quad 0<\kappa\le \min(1/T,-\ln(\rho)/\tau).
$$ 
Furthermore, we have the following estimates: 
\begin{align}
&\delta_\tau(e^{-z\tau})\in\Sigma_{\theta}&\forall\, z\in\Gamma_{\theta,\kappa}^{(\tau)} 
\label{angle-delta} \\
&C_0|z|\le |\delta_\tau(e^{-\tau z})|\le C_1|z| 
&\forall\, z\in\Gamma_{\theta,\kappa}^{(\tau)}  \label{z-delta}  \\
&|\delta_\tau(e^{-\tau z})-z|\le C\tau|z|^2   
&\forall\, z\in\Gamma_{\theta,\kappa}^{(\tau)}  \\
&|\delta_\tau(e^{-\tau z})^\alpha-z^\alpha|\le C\tau|z|^{\alpha+1} 
&\forall\, z\in\Gamma_{\theta,\kappa}^{(\tau)},  \label{zalpha-delta} 
\end{align}
where the constants $C_0$, $C_1$, and $C$ are independent of $\tau$ and $\kappa\in(0,\min(\frac{1}{T},-\frac{\ln(\rho)}{\tau}))$. 
\end{lemma}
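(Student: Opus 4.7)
The plan is to reduce all four estimates to properties of the entire function $g(w):=(1-e^{-w})/w$ (with $g(0)=1$), via the factorization $\delta_\tau(e^{-\tau z})=z\,g(\tau z)$. The key observation is that for $z\in\Gamma_{\theta,\kappa}^{(\tau)}$ the constraint $|\mathrm{Im}(z)|\le\pi/\tau$ forces $|\tau z|\le M_\theta$ for a constant $M_\theta$ depending only on $\theta$: on the rays $z=\rho e^{\pm\ii\theta}$ this follows from $\rho\sin\theta\le\pi/\tau$, while on the circular arc $|z|=\kappa$ the hypothesis $\kappa\le 1/T\le 1/\tau$ gives $|\tau z|\le 1$. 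In particular, $M_\theta=\pi/\sin\theta<2\pi$ (using $\theta<\mathrm{arccot}(-2/\pi)<5\pi/6$), so $g$ stays away from its nonzero roots $2\pi\ii\mathbb{Z}\setminus\{0\}$ on the relevant region.

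For the analyticity claim, $\delta_\tau(e^{-\tau z})=(1-e^{-\tau z})/\tau$ is entire, so the resolvent $(\delta_\tau(e^{-\tau z})^\alpha-\Delta)^{-1}$ is holomorphic wherever $\delta_\tau(e^{-\tau z})^\alpha$ avoids the spectrum $\sigma(\Delta)\subset(-\infty,0]$. Once \eqref{angle-delta} is proved on $\Gamma_{\theta,\kappa}^{(\tau)}$, the choice $\alpha\theta<\pi$ places $\delta_\tau(e^{-\tau z})^\alpha$ in $\Sigma_{\alpha\theta}$, avoiding the negative real axis. The hypothesis $\kappa\le-\ln\rho/\tau$ puts $\Gamma_\rho^{(\tau)}$ to the right of $\Gamma_{\theta,\kappa}^{(\tau)}$, and on the horizontal caps $\mathrm{Im}(z)=\pm\pi/\tau$ one has $e^{-\tau z}\in(-\infty,0)$, so $1-e^{-\tau z}$ is positive real, which keeps the image inside $\Sigma_\theta$; analyticity then propagates throughout the enclosed open region.

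The angular bound \eqref{angle-delta} is the technical heart and is what dictates the upper cutoff $\theta<\mathrm{arccot}(-2/\pi)$. On the ray $z=\rho e^{\ii\theta}$, writing $w=\tau z=re^{\ii\theta}$ with $r\in(0,\pi/\sin\theta]$, one computes
\begin{align*}
\mathrm{Re}(1-e^{-w}) &= 1-e^{-r\cos\theta}\cos(r\sin\theta),\\
\mathrm{Im}(1-e^{-w}) &= e^{-r\cos\theta}\sin(r\sin\theta);
\end{align*}
since $\cos\theta<0$ and $r\sin\theta\in(0,\pi]$ the imaginary part is nonnegative, and an elementary analysis of the ratio $\mathrm{Im}/\mathrm{Re}$ shows that $\arg(1-e^{-w})$ is maximized in the limit $r\sin\theta\to\pi$, producing the sharp threshold $\cot\theta=-2/\pi$. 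The lower ray follows by complex conjugation, and on the circular arc $|w|\le 1$ the expansion $g(w)=1+O(w)$ gives $|\arg g(w)|$ bounded by a constant much smaller than $\theta-\pi/2$, making \eqref{angle-delta} trivial there.

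Finally, \eqref{z-delta} and the two difference estimates follow at once from $\delta_\tau(e^{-\tau z})=z\,g(\tau z)$ and the Taylor development $g(w)=1-w/2+O(w^2)$. The function $g$ is continuous, bounded, and nonvanishing on the compact set $\{|w|\le M_\theta\}$, so $C_0\le|g(\tau z)|\le C_1$ proves \eqref{z-delta}; the Taylor bound $|g(w)-1|\le C|w|$ on the same compact set gives $|\delta_\tau(e^{-\tau z})-z|\le C\tau|z|^2$; and writing $\delta_\tau(e^{-\tau z})^\alpha-z^\alpha=z^\alpha\bigl(g(\tau z)^\alpha-1\bigr)$, the mean value theorem for $s\mapsto s^\alpha$ applied on the compact set $\{g(w):|w|\le M_\theta\}$ (which lies in a bounded sector away from the branch cut of $s^\alpha$ by \eqref{angle-delta}) yields $|g(\tau z)^\alpha-1|\le C|g(\tau z)-1|\le C\tau|z|$, hence \eqref{zalpha-delta}. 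The hard part will be the trigonometric computation pinning down the sharp constant $\mathrm{arccot}(-2/\pi)$; everything else is routine compactness, analyticity, and Taylor bookkeeping.
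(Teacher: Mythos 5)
Your reduction of \eqref{z-delta}--\eqref{zalpha-delta} to the factorization $\delta_\tau(e^{-\tau z})=z\,g(\tau z)$ with $g(w)=(1-e^{-w})/w$, together with compactness of $\{|w|\le\pi/\sin\theta\}$ and Taylor bounds, is sound and is essentially what the paper dispatches in one line as ``simple consequences of Taylor's theorem.'' The genuine content of the lemma, however, is the angular bound \eqref{angle-delta} on the rays, and that is exactly the step you leave undone --- and the sketch you give for it is wrong. You claim that ``$\arg(1-e^{-w})$ is maximized in the limit $r\sin\theta\to\pi$, producing the sharp threshold $\cot\theta=-2/\pi$.'' But at $r\sin\theta=\pi$ one has $\mathrm{Im}(1-e^{-w})=e^{-r\cos\theta}\sin\pi=0$ and $\mathrm{Re}(1-e^{-w})=1+e^{-r\cos\theta}>0$, so $\arg(1-e^{-w})=0$ there; the supremum of the argument along the ray is $\theta$ itself, approached as $r\to 0^+$ (since $1-e^{-w}\sim w$). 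So locating a maximizer at $r\sin\theta=\pi$ cannot be the source of the constant. What the paper actually does is bound $\cot\bigl(\arg\bigl(\frac{1-e^{-\tau z}}{\tau}\bigr)\bigr)$ from below by $\frac{1+\omega\cot\varphi-\cos\omega}{\sin\omega}$ with $\omega=\tau|z|\sin\varphi\in[0,\pi]$, using $e^x\ge 1+x$, and then proves the auxiliary inequality $1+\omega\cot\varphi-\cos\omega-\sin\omega\cot\varphi\ge 0$ on $[0,\pi]$ by a monotonicity argument that reduces to checking the endpoints, where the values are $0$ and $2+\pi\cot\varphi$; the condition $\theta\le{\rm arccot}(-2/\pi)$ is precisely what makes the second endpoint value nonnegative. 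That endpoint analysis of an auxiliary function --- not a maximization of the argument --- is the one nontrivial computation in the lemma, and it is absent from your proposal.

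Two smaller points. First, your statement that ``analyticity then propagates throughout the enclosed open region'' from boundary information is not automatic: knowing $\delta_\tau(e^{-\tau z})^\alpha$ avoids $(-\infty,0]$ on the boundary does not by itself control the interior. The paper avoids this by proving the stronger inequality $|\arg\delta_\tau(e^{-\tau z})|\le|\arg z|$ for \emph{all} $z$ with $|\arg z|\le\theta$ and $|\mathrm{Im}\,z|\le\pi/\tau$, which covers the whole enclosed region; alternatively you could invoke the maximum principle for the harmonic function $\arg\delta_\tau(e^{-\tau z})$ (legitimate since $\delta_\tau(e^{-\tau z})$ is nonvanishing on the simply connected region), but that needs to be said. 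Second, the identity $(z\,g(\tau z))^\alpha=z^\alpha g(\tau z)^\alpha$ used in your proof of \eqref{zalpha-delta} requires checking that no branch crossing occurs; this follows from the refined form of \eqref{angle-delta} (the argument of $\delta_\tau(e^{-\tau z})$ lies between $0$ and $\arg z$), but again that is the very estimate you have not established.
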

\begin{proof}
Clearly, \eqref{angle-delta} is a consequence of the following two inequalities: 
\begin{align}
&0\le {\rm arg}\bigg(\frac{1-e^{-z\tau}}{\tau}\bigg)\le {\rm arg}(z)
&&\mbox{if}\,\,\, 0\le {\rm arg}(z)\le \theta ,  \label{argz-0} \\
&-{\rm arg}(z)\le {\rm arg}\bigg(\frac{1-e^{-z\tau}}{\tau}\bigg)\le 0
&&\mbox{if}\,\,\, -\theta\le {\rm arg}(z)\le 0 , \label{argz-0-}
\end{align}
which can be proved in the following way when $\frac{\pi}{2}\le \theta\le {\rm arccot}\big(-\frac{2}{\pi}\big)$. 

If ${\rm arg}(z)=\varphi\in[0,\theta] $ and $0\le {\rm Im}(z)\le \pi/\tau$ (thus $0\le \tau |z|\sin(\varphi)\le \pi$), then it is easy to see that ${\rm arg}\big(\frac{1-e^{-\tau z}}{\tau}\big)\ge 0$ and 
\begin{align*}
\cot \bigg({\rm arg}\bigg(\frac{1-e^{-\tau z}}{\tau}\bigg)\bigg)
&=\frac{1-e^{-\tau |z|\cos(\varphi)}\cos(\tau |z|\sin(\varphi)) }{e^{-\tau |z|\cos(\varphi)}\sin (\tau |z|\sin(\varphi)) } \\
&=\frac{e^{\tau |z|\cos(\varphi) }-\cos(\tau |z|\sin(\varphi)) }{\sin (\tau |z|\sin(\varphi)) } \\
&\ge \frac{1+\tau |z|\cos(\varphi) -\cos(\tau |z|\sin(\varphi)) }{\sin (\tau |z|\sin(\varphi)) } 
\ \quad\mbox{(Taylor's expansion)} \\
&= \frac{1+\omega\cot(\varphi)  -\cos(\omega) }{\sin (\omega) } 
\ \ \quad\quad \mbox{(set $\omega=\tau |z|\sin(\varphi)\in[0,\pi)$)} .
\end{align*}
We shall prove $\cot\big({\rm arg}\big(\frac{1-e^{-\tau z}}{\tau}\big)\big)\ge \cot(\varphi)$ so that 
$0\le {\rm arg}\big(\frac{1-e^{-\tau z}}{\tau}\big)\le \varphi={\rm arg}(z)$. 
To this end, we consider the function 
$$
g(\omega):=1+\omega\cot(\varphi)  -\cos(\omega) -\sin (\omega)\cot(\varphi) ,\quad\omega\in[0,\pi] , 
$$
whose derivative is 
$$
g'(\omega):=\frac{\cos(\varphi)  -\cos(\omega+\varphi)}{\sin(\varphi)} . 
$$
In the case $\varphi\in(0,\frac{\pi}{2}]$, $g'(\omega)\ge 0$ for $\omega\in[0,\pi]$. In the case $\varphi\in(\frac{\pi}{2},\pi)$, $g'(\omega)\ge 0$ for $\omega\in[0,2\pi-2\varphi]$ and $g'(\omega)\le 0$ for $\omega\in[2\pi-2\varphi,\pi]$.  
In either case, the function $g(\omega)$ achieves its minimum value at one of the two end points $\omega=0$ and $\omega=\pi$, with 
$$
g(0)=0\quad\mbox{and}\quad g(\pi)=2+\pi\cot(\varphi). 
$$
If $\frac{\pi}{2}\le \theta\le {\rm arccot}\big(-\frac{2}{\pi}\big)$, we then have $g(\pi)\ge 0$. Consequently, 
$g(\omega)\ge 0$ for all $\omega\in[0,\pi]$ and $\cot\big({\rm arg}\big(\frac{1-e^{-\tau z}}{\tau}\big)\big)\ge \cot(\varphi)$ which implies 
$$
0\le {\rm arg}\bigg(\frac{1-e^{-\tau z}}{\tau}\bigg)\le \varphi={\rm arg}(z).
$$
This proves \eqref{argz-0}. The inequality \eqref{argz-0-} can be proved in the same way. 
This completes the proof of \eqref{angle-delta} which further implies that 
$\delta(e^{-z\tau})$ and $(\delta(e^{-z\tau})^\alpha-\Delta)^{-1}$ are analytic with respect to $z$  
in the region enclosed by 
$$
\mbox{$\Gamma^{(\tau)}_\rho$,\,\,\, $\Gamma^{(\tau)}_{\theta,\kappa}$\,\,\, and the two lines 
$\mathbb{R}\pm\ii\pi/\tau$}
\quad\mbox{whenever}\quad 0<\kappa\le -\ln(\rho)/\tau.
$$ 

The estimates \eqref{z-delta}-\eqref{zalpha-delta} are simple consequences of Taylor's theorem. 
\end{proof}

\begin{remark}
The condition $\kappa\le \frac{1}{T}$ is not needed in the proof of this lemma, but is needed in the estimates of the next subsection such as \eqref{esti-f}. 
\end{remark}

To derive the representation of the numerical solution $u_n$, we introduce some notations. 
Let $\bar\partial_\tau W$ be defined by 
\begin{align}\label{pw-const}
&\bar\partial_\tau W(\cdot,t_0):=0 \\
&\bar\partial_\tau W(\cdot,t):=\frac{W(\cdot,t_n)-W(\cdot,t_{n-1})}{\tau}&&\mbox{for}\,\,\, t\in(t_{n-1},t_n],\,\,\, n=1,2,\dots,N\\
&\bar\partial_\tau W(\cdot,t):=0  &&\mbox{for } t> t_{N} ,
\end{align}
where we have set $\bar\partial_\tau W(\cdot,t)=0 $ for $t>t_N$; this does not affect the value of $u_n$, $n=1,2,\dots,N$, upon solving \eqref{CQ-scheme2}.   
Similarly, we define 
\begin{align}\label{pw-const-}
&\bar\partial_\tau W_j(t_0):=0 \\
&\bar\partial_\tau W_j(t):=\frac{W_j(t_n)-W_j(t_{n-1})}{\tau}&&\mbox{for}\,\,\, t\in(t_{n-1},t_n],\,\,\, n=1,2,\dots,N\\
&\bar\partial_\tau W_j(t):=0  && \mbox{for}\,\,\,t> t_{N} .
\end{align}
With these definitions, there are only a finite number of nonzero terms in the sequence 
$\bar\partial_\tau W(\cdot,t_n)$, $n=0,1,2,\dots$. Consequently, the generating function
$$
\widetilde{\bar\partial_\tau W}(\cdot,\zeta)=\sum_{n=0}^\infty \bar\partial_\tau W(\cdot,t_n)\zeta^n
$$
is well defined (polynomial in $\zeta$). Then, we have the following result. 

\begin{proposition}\label{un-rep} 
For the time-stepping scheme \eqref{CQ-scheme2}, the semidiscrete solution $u_n$ can be represented by 
\begin{align}\label{u-semi}
u_n  
&=
\int_0^{t_n}  
E_\tau(t_n-s)\bar\partial_\tau W(\cdot,s)\d s \\
&=
 \sum_{j=1}^\infty \int_{0}^{t_n} E_\tau(t_n-s) \phi_j \bar\partial_\tau W_j(s) \d s  ,
\end{align} 
where the operator $E_\tau(\cdot)$ is given by 
\begin{equation}\label{eqn:EF2}
E_\tau(t) \phi:=\frac{1}{2\pi {\rm i}}\int_{\Gamma_{\theta,\kappa}^{(\tau)}}e^{zt} \frac{z\tau }{e^{z\tau}-1}\delta_\tau(e^{-z\tau})^{\alpha-1}(\delta_\tau(e^{-z\tau})^\alpha  -\Delta )^{-1}\phi\, \d z  \quad
\forall\, \phi\in L^2(\mathcal{O})
\end{equation}
with integration over the truncated contour $\Gamma_{\theta,\kappa}^{(\tau)}$ defined in \eqref{trunc-contour}, oriented with increasing imaginary parts, with the parameters $\kappa$ and  $\theta$ satisfying the conditions of Lemma \ref{ineq-1}.  
\end{proposition}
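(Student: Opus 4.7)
The plan is to pass to the generating function of $\{u_n\}$, invert it by Cauchy's integral formula on a small circle $|\zeta|=\rho<1$, and then convert the resulting circular integral into one over $\Gamma_{\theta,\kappa}^{(\tau)}$ via the substitution $\zeta=e^{-z\tau}$ together with Lemma \ref{ineq-1} and the $2\pi\ii/\tau$-periodicity of $e^{-z\tau}$.

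First I would apply the identity \eqref{generate-dv} to both terms on the left of \eqref{CQ-scheme2}. Using $u_0=0$ to write $\sum_{n\ge 0}(u_n-u_{n-1})/\tau\cdot\zeta^n=\delta_\tau(\zeta)\widetilde u(\zeta)$, the scheme translates into
\[
\bigl[\delta_\tau(\zeta)-\delta_\tau(\zeta)^{1-\alpha}\Delta\bigr]\widetilde u(\zeta)=\widetilde{\bar\partial_\tau W}(\cdot,\zeta),
\]
whence $\widetilde u(\zeta)=\delta_\tau(\zeta)^{\alpha-1}\bigl(\delta_\tau(\zeta)^\alpha-\Delta\bigr)^{-1}\widetilde{\bar\partial_\tau W}(\cdot,\zeta)$. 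Cauchy's formula then gives $u_n=(2\pi\ii)^{-1}\oint_{|\zeta|=\rho}\zeta^{-n-1}\widetilde u(\zeta)\,\d\zeta$. I would next substitute $\zeta=e^{-z\tau}$: the circle $|\zeta|=\rho$ oriented counterclockwise corresponds to the vertical segment $\Gamma_\rho^{(\tau)}$ traversed with decreasing imaginary part, and combined with $\d\zeta=-\tau\zeta\,\d z$ the two sign changes cancel to produce
\[
u_n=\frac{\tau}{2\pi\ii}\int_{\Gamma_\rho^{(\tau)}}e^{nz\tau}\,\delta_\tau(e^{-z\tau})^{\alpha-1}\bigl(\delta_\tau(e^{-z\tau})^\alpha-\Delta\bigr)^{-1}\widetilde{\bar\partial_\tau W}(\cdot,e^{-z\tau})\,\d z.
\]

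Lemma \ref{ineq-1} then supplies analyticity of this integrand throughout the rectangle bounded by $\Gamma_\rho^{(\tau)}$, $\Gamma_{\theta,\kappa}^{(\tau)}$ and the two horizontal segments $\mathrm{Im}(z)=\pm\pi/\tau$. On those segments $e^{-z\tau}$ takes the same value at both heights (equal to $-e^{-\mathrm{Re}(z)\tau}$) and $e^{nz\tau}$ differs by a factor $e^{\pm 2\pi\ii n}=1$, so the two horizontal contributions have identical integrands with opposite orientations and cancel; Cauchy's theorem thus deforms the contour to $\Gamma_{\theta,\kappa}^{(\tau)}$. Finally I would expand $\widetilde{\bar\partial_\tau W}(\cdot,e^{-z\tau})=\sum_{m=1}^{N}\bar\partial_\tau W(\cdot,t_m)e^{-mz\tau}$ (a finite sum by \eqref{pw-const}) and use $\int_{t_{m-1}}^{t_m}e^{z(t_n-s)}\d s=e^{(n-m)z\tau}(e^{z\tau}-1)/z$; the factor $z\tau/(e^{z\tau}-1)$ in \eqref{eqn:EF2} is precisely what is needed so that
\[
\int_{t_{m-1}}^{t_m}E_\tau(t_n-s)\,\d s=\frac{\tau}{2\pi\ii}\int_{\Gamma_{\theta,\kappa}^{(\tau)}}e^{(n-m)z\tau}\delta_\tau(e^{-z\tau})^{\alpha-1}\bigl(\delta_\tau(e^{-z\tau})^\alpha-\Delta\bigr)^{-1}\d z,
\]
which, matched term by term, identifies $u_n$ with $\int_0^{t_n}E_\tau(t_n-s)\bar\partial_\tau W(\cdot,s)\d s$. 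The extra $m>n$ terms vanish because $e^{(n-m)z\tau}$ decays as $\mathrm{Re}(z)\to+\infty$ and the integrand is analytic to the right of $\Gamma_{\theta,\kappa}^{(\tau)}$, so closing the contour to the right (horizontal pieces again cancelling by periodicity) gives zero; equivalently, $E_\tau(t)=0$ for $t<0$.

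The main obstacle is the contour manipulation itself: keeping track of the orientation under $\zeta=e^{-z\tau}$, and verifying analyticity throughout the full rectangle via the delicate angle condition $\theta\in(\pi/2,\operatorname{arccot}(-2/\pi))$ from Lemma \ref{ineq-1}, so that the $2\pi\ii/\tau$-periodicity argument can be invoked cleanly to drop the horizontal segments. Once the contour has been safely moved to $\Gamma_{\theta,\kappa}^{(\tau)}$, the remaining identification via the $z\tau/(e^{z\tau}-1)$ factor is purely algebraic.
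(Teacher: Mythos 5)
Your proposal is correct and follows essentially the same route as the paper: generating function, Cauchy's integral formula on $|\zeta|=\rho$, the substitution $\zeta=e^{-z\tau}$, and deformation to $\Gamma_{\theta,\kappa}^{(\tau)}$ using the analyticity and periodicity supplied by Lemma \ref{ineq-1}. Your final term-by-term matching is just an explicit unwinding of the paper's identity $\widetilde{\bar\partial_\tau W}(\cdot,e^{-z\tau})=\frac{z}{e^{z\tau}-1}\widehat{\bar\partial_\tau W}(\cdot,z)$ combined with the Laplace convolution rule, so the two arguments coincide in substance.
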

\begin{proof}
In view of definition \eqref{gener-func} and the identity \eqref{generate-dv}, 
multiplying \eqref{CQ-scheme2} by $\zeta^n$ and summing up the results over $n=0,1,2,\dots$ yield 
\begin{align}
\delta_\tau(\zeta)\widetilde u(\zeta)-\delta_\tau(\zeta)^{1-\alpha}\Delta\widetilde u(\zeta)
=\widetilde{\bar\partial_\tau W}(\cdot,\zeta).
\end{align} 
Then, 
\begin{align}\label{tilde-u}
&\widetilde u(\zeta)  = \delta_\tau(\zeta)^{\alpha-1}(\delta_\tau(\zeta)^\alpha  -\Delta )^{-1}\widetilde{\bar\partial_\tau W}(\cdot,\zeta).
\end{align} 
The function $\widetilde u(\zeta)$ defined in \eqref{tilde-u} is analytic with respect to $\zeta$ in a neighborhood of the origin. By Cauchy's integral formula, it implies that for $\rho\in(0,1)$  
\begin{align*}
u_n
=
\frac{1}{2\pi\ii}\int_{|\zeta|=\rho} \zeta^{-n-1}\widetilde u(\zeta) \d\zeta
=
\frac{\tau}{2\pi\ii}\int_{\Gamma^{(\tau)}_\rho} e^{zt_n}\widetilde u (e^{-z\tau})\d z,
\end{align*}
where the second equality is obtained by the change of variables $\zeta=e^{-z\tau}$, with the contour $\Gamma^{(\tau)}_\rho$ defined in \eqref{contour-rho}. 

From Lemma \ref{ineq-1}, we see that both $\delta(e^{-z\tau})$ and 
$(\delta(e^{-z\tau})^\alpha-\Delta)^{-1}$ are analytic with respect to $z$ in the region $\Sigma\subset\mathbb{C}$ enclosed by $\Gamma^{(\tau)}_\rho$, $\Gamma^{(\tau)}_{\theta,\kappa}$, and the two lines $\mathbb{R}\pm\ii\pi/\tau$. Thus, $e^{zt_n}\widetilde u(e^{-z\tau})$ is analytic with respect to $z\in\Sigma$. Because the values of $e^{zt_n}\widetilde u(e^{-z\tau})$ on the two lines $\mathbb{R}\pm\ii\pi/\tau$ coincide, it follows that (by applying Cauchy's integral formula) 
\begin{align}\label{un-rep-gammatau}
u_n
&=
\frac{\tau}{2\pi\ii}\int_{\Gamma^{(\tau)}_\rho} e^{zt_n}\widetilde u (e^{-z\tau})\d z \nonumber\\
&=
\frac{\tau}{2\pi\ii}\int_{\Gamma^{(\tau)}_{\theta,\kappa}} e^{zt_n}\widetilde u (e^{-z\tau})\d z
+\frac{\tau}{2\pi\ii}\int_{\mathbb{R}+\frac{\ii\pi}{\tau}} e^{zt_n}\widetilde u (e^{-z\tau})\d z \nonumber \\
&\qquad\qquad\qquad\qquad\qquad\quad\,\,\,
-\frac{\tau}{2\pi\ii}\int_{\mathbb{R}-\frac{\ii\pi}{\tau}} e^{zt_n}\widetilde u (e^{-z\tau})\d z \nonumber \\
&=
\frac{\tau}{2\pi\ii}\int_{\Gamma^{(\tau)}_{\theta,\kappa}} e^{zt_n}\widetilde u (e^{-z\tau})\d z \nonumber \\
&=
\frac{\tau}{2\pi i}\int_{\Gamma_{\theta,\kappa}^{(\tau)}} e^{zt_n}\delta_\tau(e^{-z\tau})^{\alpha-1}(\delta_\tau(e^{-z\tau})^\alpha  -\Delta )^{-1}
\widetilde{\bar\partial_\tau W}(\cdot,e^{-z\tau}) \d z 
\nonumber \\
&=
\frac{\tau}{2\pi i}\int_{\Gamma_{\theta,\kappa}^{(\tau)}} e^{zt_n}\delta_\tau(e^{-z\tau})^{\alpha-1}(\delta_\tau(e^{-z\tau})^\alpha  -\Delta )^{-1} 
\frac{z}{e^{z\tau}-1} \widehat {\bar\partial_\tau W}(\cdot,z) \d z , 
\end{align}
where we have substituted \eqref{tilde-u} into the above equality and used the following (straightforward to check) identity in the last step:
$$
\widetilde{\partial_\tau W}(\cdot,e^{-z\tau})
=\frac{z}{e^{z\tau}-1} \widehat {\bar\partial_\tau W}(\cdot,z
) 
$$
with $\widehat {\bar\partial_\tau W}$ denoting the Laplace transform (in time) of the piecewise constant function $\bar\partial_\tau W$.

Through the Laplace transform rule 
\begin{align}\label{LT-rule}
{\mathcal L}^{-1}(\widehat f\, \widehat g)(t)
=\int_0^t{\mathcal L}^{-1}(\widehat f \, )(t-s){\mathcal L}^{-1}(\widehat g)(s)\d s , 
\end{align}
one can derive \eqref{u-semi} from \eqref{un-rep-gammatau}.
The proof of Proposition \ref{un-rep} is complete. 
\end{proof}

\subsection{Error estimate}
In this subsection, we derive an error estimate for the numerical scheme \eqref{CQ-scheme2}. The following lemma is concerned with the difference between the kernels of \eqref{eqn:EF} and \eqref{eqn:EF2}. It will be used in the proof of Theorem \ref{MainTHM}. 

\begin{lemma}\label{ineq-f} 
Let $\alpha\in(0,2/d)$ be given and let $\delta_\tau(\zeta)$ be defined as in 
\eqref{definition-d} with the parameters $\kappa$ and $\theta$ satisfying the conditions of Lemma \ref{ineq-1}. Then, we have  
\begin{align*}
&\bigg|z^{\alpha-1}(z^\alpha+\lambda_j)^{-1} 
-\frac{z\tau }{e^{z\tau}-1} \delta_\tau(e^{-\tau z})^{\alpha-1}(\delta_\tau(e^{-\tau z})^\alpha+\lambda_j)^{-1}\bigg|  
 \le
 \frac{C\tau|z|^\alpha }{|z|^\alpha+\lambda_j }  ,
\ \forall\, z\in \Gamma_{\theta,\kappa}^{(\tau)}. 
\end{align*}
\end{lemma}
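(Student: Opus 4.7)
The plan is to telescope the difference into three contributions, each of which is controlled term-by-term using Lemmas \ref{ineq-sum} and \ref{ineq-1}. Writing $\delta := \delta_\tau(e^{-\tau z})$ and $\mu(w) := w/(e^w-1)$, I would add and subtract intermediate factors to obtain
\begin{align*}
&\frac{z\tau}{e^{z\tau}-1}\,\delta^{\alpha-1}(\delta^\alpha+\lambda_j)^{-1} - z^{\alpha-1}(z^\alpha+\lambda_j)^{-1} \\
&\quad = (\mu(z\tau)-1)\,\delta^{\alpha-1}(\delta^\alpha+\lambda_j)^{-1}
 + (\delta^{\alpha-1}-z^{\alpha-1})(\delta^\alpha+\lambda_j)^{-1} \\
&\qquad - z^{\alpha-1}(\delta^\alpha+\lambda_j)^{-1}(\delta^\alpha-z^\alpha)(z^\alpha+\lambda_j)^{-1} \\
&\quad =: A + B + C.
\end{align*}

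Next I would establish three auxiliary estimates. First, on $\Gamma_{\theta,\kappa}^{(\tau)}$ the product $z\tau$ lies in the strip $|\mathrm{Im}(z\tau)| \le \pi$ and its real part is bounded above (on the rays $\mathrm{Re}(z)\le 0$, and on the circular arc $|z|=\kappa\le 1/T$), so $\mu(w)-1$ is holomorphic on a neighborhood of the image of the contour and vanishes at $w=0$; a direct Taylor/Cauchy argument then yields $|\mu(z\tau)-1| \le C\tau|z|$ uniformly. Second, for $\beta\in\{\alpha-1,\alpha\}$, I would write $\delta^\beta - z^\beta = z^\beta((\delta/z)^\beta-1)$ and use $|\delta/z-1|=|\delta-z|/|z|\le C\tau|z|$ from Lemma \ref{ineq-1} together with the fact that, by \eqref{z-delta} and \eqref{angle-delta}, the ratio $\delta/z$ stays in a compact subset of $\mathbb{C}\setminus(-\infty,0]$; since $w\mapsto w^\beta$ is Lipschitz there, this gives $|\delta^{\alpha-1}-z^{\alpha-1}|\le C\tau|z|^\alpha$ and recovers \eqref{zalpha-delta} in the form $|\delta^\alpha-z^\alpha|\le C\tau|z|^{\alpha+1}$. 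Third, since $z^\alpha,\delta^\alpha\in\Sigma_{\alpha\theta}$ with $\alpha\theta<\pi$, Lemma \ref{ineq-sum} combined with $|\delta|\asymp|z|$ gives $|(\delta^\alpha+\lambda_j)^{-1}|$ and $|(z^\alpha+\lambda_j)^{-1}|$ both bounded by $C(|z|^\alpha+\lambda_j)^{-1}$.

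Assembling: $|A|\le C\tau|z|\cdot|z|^{\alpha-1}\cdot(|z|^\alpha+\lambda_j)^{-1}= C\tau|z|^\alpha(|z|^\alpha+\lambda_j)^{-1}$; $|B|\le C\tau|z|^\alpha(|z|^\alpha+\lambda_j)^{-1}$; and $|C|\le |z|^{\alpha-1}\cdot C\tau|z|^{\alpha+1}(|z|^\alpha+\lambda_j)^{-2} = C\tau|z|^{2\alpha}(|z|^\alpha+\lambda_j)^{-2}$, which is dominated by $C\tau|z|^\alpha(|z|^\alpha+\lambda_j)^{-1}$ via the trivial inequality $|z|^\alpha/(|z|^\alpha+\lambda_j)\le 1$. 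Summing yields the claim. I expect the main technical nuisance to be the bound on $|\mu(z\tau)-1|$ near the truncation endpoints, where $|z\tau|$ is only bounded (not small) and a naive Taylor expansion around zero is not immediately legal; handling this requires noting that $\mu$ is analytic on the closed strip $|\mathrm{Im}\,w|\le\pi$ minus $\{0\}$ with a removable singularity at $0$, together with the explicit decay of $|e^{z\tau}-1|$ on the rays $\mathrm{Re}(z)<0$ to absorb the growth of $|z\tau|$.
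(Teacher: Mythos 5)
Your proposal is correct and follows essentially the same route as the paper: the same three-term telescoping into a quadrature-weight error, a power-function error, and a resolvent difference, each controlled by the bounds of Lemma \ref{ineq-1} and the sectorial resolvent estimate \eqref{ineq-sum-} together with $|\delta_\tau(e^{-\tau z})|\asymp|z|$. The only cosmetic differences are where the factor $z\tau/(e^{z\tau}-1)$ is attached in the splitting and that you bound $|\delta^{\alpha-1}-z^{\alpha-1}|$ via a Lipschitz estimate for $w\mapsto w^{\alpha-1}$ on a compact sector, whereas the paper uses the identity $z^{\alpha-1}-\delta^{\alpha-1}=(z^\alpha-\delta^\alpha)z^{-1}+\delta^\alpha(z^{-1}-\delta^{-1})$; both yield the same estimate.
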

\begin{proof}
By the triangle inequality and Lemma \ref{ineq-1}, we have 
\begin{align*}
&|z^{\alpha-1}(z^\alpha+\lambda_j)^{-1} 
- \frac{z\tau }{e^{z\tau}-1} \delta_\tau(e^{-\tau z})^{\alpha-1}(\delta_\tau(e^{-\tau z})^\alpha+\lambda_j)^{-1}| \\
&\quad\le
\bigg| \frac{e^{z\tau}-1-z\tau }{e^{z\tau}-1} \bigg|
|z^{\alpha-1}(z^\alpha+\lambda_j)^{-1}| \\
&\qquad +
\bigg|\frac{z\tau }{e^{z\tau}-1}\bigg|
|z|^{\alpha-1}|(z^\alpha+\lambda_j)^{-1}-(\delta_\tau(e^{-\tau z})^\alpha+\lambda_j)^{-1} | \\
&\qquad 
+ 
\bigg|\frac{z\tau }{e^{z\tau}-1}\bigg||z^{\alpha-1}-\delta_\tau(e^{-\tau z})^{\alpha-1}| |(\delta_\tau(e^{-\tau z})^\alpha+\lambda_j)^{-1}| =: \mathcal{J}_1 + \mathcal{J}_2 + \mathcal{J}_3 ,
\end{align*}
where 
\begin{align*}
\mathcal{J}_1 
&\le C|z\tau| |z^{\alpha-1}(z^\alpha+\lambda_j)^{-1}| 
\quad\mbox{\big(using the Taylor expansion of $\big| \frac{e^{z\tau}-1-z\tau }{e^{z\tau}-1} \big|$\big)} \\
\mathcal{J}_2 
&\le C
|z|^{\alpha-1}|z^\alpha-\delta_\tau(e^{-\tau z})^\alpha| 
|(z^\alpha+\lambda_j)^{-1}(\delta_\tau(e^{-\tau z})^\alpha+\lambda_j)^{-1}|  \\
&\le C \tau|z|^{2\alpha}  
|(z^\alpha+\lambda_j)^{-1}(\delta_\tau(e^{-\tau z})^\alpha+\lambda_j)^{-1}|
\qquad\,\,\,\mbox{(here we use \eqref{zalpha-delta})} \\
&\le C\tau|z|^{2\alpha} 
(|z|^\alpha+\lambda_j)^{-1}(|\delta_\tau(e^{-\tau z})|^\alpha+\lambda_j)^{-1} \\
&\le
\frac{C\tau|z|^\alpha}{|z|^\alpha+\lambda_j}.
\end{align*}
In the estimates above we have used the following inequality (cf. \cite[inequality C.1]{GLW-2017})
\begin{align}\label{ztau-eztau}
\bigg|\frac{z\tau}{e^{z\tau}-1}\bigg|\le C,\quad\forall\, z\in \Gamma_{\theta,\kappa}^{(\tau)}.
\end{align}

The last inequality is due to Lemma \ref{ineq-sum} together with the angle condition ${\rm arg}(z^\alpha)\le \alpha\theta<\pi$ and ${\rm arg}(\delta_\tau(e^{-\tau z})^\alpha)\le \alpha\theta<\pi$ (cf. Lemma \ref{ineq-1}). 
Furthermore, we have 
\begin{align*} 
\mathcal{J}_3 
&\le C|z^{\alpha-1}-\delta_\tau(e^{-\tau z})^{\alpha-1}| |(\delta_\tau(e^{-\tau z})^\alpha+\lambda_j)^{-1}| \\
&\le C\Big(|z^{\alpha}-\delta_\tau(e^{-\tau z})^{\alpha}| |z|^{-1}
+ |z^{-1}-\delta_\tau(e^{-\tau z})^{-1}||\delta_\tau(e^{-\tau z})|^{\alpha} \Big)|(\delta_\tau(e^{-\tau z})^\alpha+\lambda_j)^{-1}| \\ 
&\le  \Big(C\tau |z|^{1+\alpha} |z|^{-1}  
+C \tau |z|^2 |z|^{-1}  |\delta_\tau(e^{-\tau z})|^{\alpha-1} \Big)|(\delta_\tau(e^{-\tau z})^\alpha+\lambda_j)^{-1}| \\ 
&
\le  
\frac{C\tau|z|^\alpha}{|z|^\alpha+\lambda_j} 
\qquad\qquad \mbox{(here we use Lemma \ref{ineq-sum} and Lemma \ref{ineq-1})} .
\end{align*}
The proof of Lemma \ref{ineq-f} is complete.  
\end{proof}

Now, we start to prove Theorem \ref{MainTHM}. 
From \eqref{Mild-sol} and \eqref{eqn:EF} we see that the mild solution admits the decomposition 
\begin{align} \label{u-repr-}
u(\cdot,t)
&= \sum_{j=1}^\infty  \phi_j\int_0^t F^{(\tau)}_j (t-s) \d W_j(s)  
+ \sum_{j=1}^\infty \phi_j\int_0^t H^{(\tau)}_j (t-s) \d W_j(s)  
\end{align}
with 
\begin{align} 
&F^{(\tau)}_j (t)   :=  \frac{1}{2\pi {\rm i}}\int_{\Gamma_{\theta,\kappa}^{(\tau)}}e^{zt} z^{\alpha-1} (z^\alpha +\lambda_j)^{-1} \, \d z  \\
&H^{(\tau)}_j(t)  :=  \frac{1}{2\pi i}\int_{\Gamma_{\theta,\kappa}\backslash\Gamma_{\theta,\kappa}^{(\tau)}} e^{zt} z^{\alpha-1}(z^\alpha  +\lambda_j )^{-1}  \d z  . \label{def-Hjt}
\end{align} 
Also, \eqref{u-semi} and \eqref{eqn:EF2} imply 
\begin{align} \label{un-repr-}
u_n  
&=\sum_{j=1}^\infty\phi_j \int_0^{t_n} E^{(\tau)}_j(t_n-s) \bar\partial_\tau W_j(s) \d s 
\end{align} 
with  
\begin{equation} \label{expr-Etau}
E^{(\tau)}_j(t):=\frac{1}{2\pi {\rm i}}\int_{\Gamma_{\theta,\kappa}^{(\tau)}}e^{zt} \frac{z\tau }{e^{z\tau}-1}\delta_\tau(e^{-z\tau})^{\alpha-1}(\delta_\tau(e^{-z\tau})^\alpha +\lambda_j )^{-1} \, \d z . 
\end{equation}
Comparing \eqref{u-repr-} and \eqref{un-repr-} yields  
\begin{align} \label{error}
u(\cdot,t_n)-u_n  
&= \sum_{j=1}^\infty  \phi_j\int_0^{t_n} (F^{(\tau)}_j (t_n-s) -E^{(\tau)}_j (t-s))\d W_j(s)  \nonumber\\
&\quad 
+\sum_{j=1}^\infty  \phi_j\int_0^{t_n} E^{(\tau)}_j (t_n-s) \Big (\d W_j(s) -\bar\partial_\tau W_j(s)\d s \Big) \nonumber\\
&\quad 
+ \sum_{j=1}^\infty \phi_j\int_0^{t_n} H^{(\tau)}_j (t_n-s) \d W_j(s)   \nonumber\\
&=: {\mathcal E}_\tau(t_n) + {\mathcal G}_\tau(t_n)+ {\mathcal H}_\tau(t_n) .
\end{align} 
Then Theorem \ref{MainTHM} is a consequence of the following lemma. 
The proof of Theorem \ref{MainTHM} is complete. \qed

\begin{lemma}\label{E-G-H}
Under the assumptions of Theorem \ref{MainTHM}, we have 
${\mathcal E}_\tau(t_n) , {\mathcal G}_\tau(t_n), {\mathcal H}_\tau(t_n)\in L^2(\Omega;L^2(\mathcal{O}))$, satisfying the following estimate: 
\begin{align}
{\mathbb E} \|{\mathcal E}_\tau(t_n)\|^2
+{\mathbb E} \|{\mathcal G}_\tau(t_n)\|^2
+{\mathbb E} \|{\mathcal H}_\tau(t_n)\|^2 \leq C\tau^{1-\alpha d/2}. 
\end{align}
\end{lemma}
\noindent{\it Proof.}$\,\,$  
First, we estimate ${\mathcal H}_\tau(t_n)$. By choosing a number $\beta\in(\alpha d/2,1)$ and using Lemma \ref{ineq-sum}, 
we have 
\begin{align}\label{E-Htau}
{\mathbb E}\, \|{\mathcal H}_\tau(t_n) \|^2
&= \int_0^{t_n}  \sum_{j=1}^\infty |H_j^{(\tau)}(t_n-s) |^2 \d s 
\quad\mbox{(It\^o's isometry)}\nonumber \\
&
=\int_0^{t_n}  \sum_{j=1}^\infty  |H_j^{(\tau)}(s) |^2 \d s \nonumber \\
&= \int_0^{t_n} \sum_{j=1}^\infty \bigg|\frac{1}{2\pi i}\int_{\Gamma_{\theta,\kappa}\backslash\Gamma_{\theta,\kappa}^{(\tau)}} e^{zs} z^{\alpha-1}(z^\alpha  + \lambda_j )^{-1}  \d z\bigg|^2 \d s \nonumber\\
&\le \int_0^{t_n}  \sum_{j=1}^\infty \bigg|\frac{1}{2\pi i}\int_{\Gamma_{\theta,\kappa}\backslash\Gamma_{\theta,\kappa}^{(\tau)}}  \frac{z^{\alpha}}{ z^\alpha  + \lambda_j } \frac{e^{zs}}{z}  \d z\bigg|^2 \d s 
\nonumber\\
&\le  C \int_0^{t_n} \sum_{j=1}^\infty \bigg(\int_{\Gamma_{\theta,\kappa}\backslash\Gamma_{\theta,\kappa}^{(\tau)}} \frac{|\d z|}{ |z|^{2-\beta} }  \bigg)\bigg(  \int_{\Gamma_{\theta,\kappa}\backslash\Gamma_{\theta,\kappa}^{(\tau)}}  \bigg|\frac{z^{\alpha}}{ z^\alpha  + \lambda_j }\bigg|^2  \frac{ |e^{zs}|^2 }{ |z|^{\beta} }  |\d z|\bigg) \d s \nonumber\\
&\le   C \int_0^{t_n} \sum_{j=1}^\infty\bigg(\int_{1/\tau}^\infty   \frac{\d r}{ r^{2-\beta}  } \bigg)
 \bigg(\int_{1/\tau}^\infty 
\bigg|\frac{r^{\alpha}}{r^\alpha  + \lambda_j }\bigg|^2    \frac{e^{(2s  \cos\theta) r} }{ r^{\beta} }  \d r  \bigg)\d s  
\nonumber\\ 
&\le C \tau^{1-\beta} \int_0^{t_n}  \int_{1/\tau}^\infty \sum_{j=1}^\infty \bigg(\frac{r^{\alpha}}{r^\alpha  + \lambda_j }\bigg)^2  \frac{e^{(2s  \cos\theta) r} }{ r^{\beta} }   \d r   \d s 
\nonumber\\ 
&\le C \tau^{1-\beta} \int_0^{t_n}  \int_{1/\tau}^\infty  r^{\alpha d/2-\beta} e^{(2s \cos\theta) r}   \d r   \d s \nonumber\\ 
&\le C \tau^{1-\beta}    \int_{1/\tau}^\infty  r^{\alpha d/2-\beta-1} (1-e^{(2t_n \cos\theta) r})   \d r    \nonumber\\ 
&\le C \tau^{1-\beta}   \tau^{\beta-d\alpha/2}  \nonumber\\
&\le C \tau^{1-\alpha d/2} .
\end{align} 

Next, we estimate ${\mathcal E}_\tau(t_n)$. To this end, we apply Lemma \ref{ineq-f} and obtain 
\begin{align}\label{ftau}
&|F^{(\tau)}_j (s) -E^{(\tau)}_j (s)|^2 \nonumber \\
&= 
\biggl|\frac{1}{2\pi \ii}\int_{\Gamma_{\theta,\kappa}^{(\tau)}} e^{zs} 
\bigg(\frac{z^{\alpha-1}}{z^\alpha+ \lambda_j } 
- \frac{z\tau }{e^{z\tau}-1} \frac{\delta_\tau(e^{-\tau z})^{\alpha-1}}{\delta_\tau(e^{-\tau z})^\alpha  + \lambda_j  } \bigg) \d z \bigg|^2 \nonumber\\
&\le 
\bigg(\int_{\Gamma_{\theta,\kappa}^{(\tau)}} |e^{zs}| 
\frac{C\tau|z|^\alpha }{|z|^\alpha+\lambda_j } |\d z|  \bigg)^2
 \nonumber \\ 
&\le 
C\tau^2 
\bigg( \int_{\Gamma_{\theta,\kappa}^{(\tau)}} |\d z| \bigg) 
\int_{\Gamma_{\theta,\kappa}^{(\tau)}} 
\bigg(\frac{|z|^\alpha }{|z|^\alpha+\lambda_j } \bigg)^2 |e^{zs}|^2 |\d z|   \nonumber \\ 
&\le 
C\tau 
\int_{\Gamma_{\theta,\kappa}^{(\tau)}} 
\bigg(\frac{|z|^\alpha }{|z|^\alpha+\lambda_j } \bigg)^2 |e^{zs}|^2 |\d z|  .  
\end{align}
By using the expression of $\mathcal{E}_\tau(t_n)$,  
we have 
\begin{align}\label{E-Etau}
{\mathbb E}\, \|{\mathcal E}_\tau(t_n) \|^2
&= \sum_{j=1}^\infty  {\mathbb E}\,\bigg|\int_0^{t_n} (F^{(\tau)}_j (t_n-s) -E^{(\tau)}_j(t_n-s))\d W_j(s)\bigg|^2  \nonumber\\
&= \sum_{j=1}^\infty  \int_0^{t_n} \big|F^{(\tau)}_j(t_n-s)-E^{(\tau)}_j(t_n-s)\big|^2 \d s  \nonumber\\
&= \sum_{j=1}^\infty  \int_0^{t_n} \big|F^{(\tau)}_j(s)-E^{(\tau)}_j(s)\big|^2 \d s  \nonumber\\
&\le 
C\tau 
\int_0^{t_n}\int_{\Gamma_{\theta,\kappa}^{(\tau)}} 
\sum_{j=1}^\infty  \bigg(\frac{|z|^\alpha }{|z|^\alpha+\lambda_j } \bigg)^2 |e^{zs}|^2 |\d z|  \d s \nonumber \\
&\le
C\tau\int_0^{t_n}\int_{\Gamma_{\theta,\kappa}^{(\tau)}} |z|^{\alpha d/2}|e^{zs}|^2|\d z| \d s, 
\end{align} 
where the last inequality follows from Lemma \ref{ineq-sum}. 
Since $t_n\ge \tau$ and 
\begin{align*}
\Gamma_{\theta,\kappa}^{(\tau)}
&=\{z\in\mathbb{C}:z=r e^{\pm\ii\theta},r\ge\kappa, r |\sin(\theta)| \le\pi/\tau \} 
\cup \{z\in\mathbb{C}:|z|=\kappa,|\arg z|\le\theta\} ,
\end{align*}
by choosing $\kappa \le \frac{2}{t_n|\sin(\theta)|}$, we have 
\begin{align}\label{esti-f}
&{\mathbb E} \|\mathcal{E}_\tau(t_n)\|^2 \nonumber \\
&\le
C\tau\int_0^{t_n}\int_\kappa^{\frac{\pi}{\tau|\sin(\theta)|}} r^{\alpha d/2} e^{(2s\cos\theta)r}\d r\d s 
\nonumber\\
&\quad 
+C\tau\int_0^{t_n}\int_{-\theta}^\theta 
\kappa^{\alpha d/2+1} e^{2s\cos(\psi)\kappa}  \d \psi \d s
\nonumber\\
&\le
C\tau\int_\kappa^{\frac{\pi}{\tau|\sin(\theta)|}} r^{\alpha d/2-1}(1-e^{(2t_n\cos\theta)r})\d r 
+C\tau\int_0^{t_n} \int_{-\theta}^\theta \kappa^{\alpha d/2+1} e^{2s\kappa} \d\psi\d s
\nonumber\\
&\le
C\tau^{1-\alpha d/2}+C\tau \kappa^{\alpha d/2}(e^{2t_n\kappa}-1) \nonumber\\
&\leq
C\tau^{1-\alpha d/2}.  
\end{align} 

Finally, we estimate $\mathcal{G}_\tau(t_n)$. Because $\bar \partial_\tau W_j(t_n)=\frac{1}{\tau}\int_{t_{n-1}}^{t_n}\d W_j(s)$, we obtain
\begin{align}
\mathcal{G}_\tau(t_n) 
&=\sum_{j=1}^\infty  \phi_j\int_0^{t_n} E^{(\tau)}_j (t_n-s) \Big (\d W_j(s) -\bar\partial_\tau W_j(s)\d s \Big)  \nonumber \\ 
&=
\sum_{j=1}^\infty  \phi_j\sum_{i=1}^n\bigg(\int_{t_{i-1}}^{t_i}E^{(\tau)}_j(t_n-s)\d W_j(s)-\int_{t_{i-1}}^{t_i}E^{(\tau)}_j(t_n-\xi)\bar \partial_\tau W_j(t_i)\d \xi\bigg) \nonumber\\
&=
\sum_{j=1}^\infty  \phi_j\sum_{i=1}^n\int_{t_{i-1}}^{t_i} \bigg(\frac{1}{\tau}\int_{t_{i-1}}^{t_i}(E^{(\tau)}_j(t_n-s)-E^{(\tau)}_j(t_n-\xi))\d \xi\bigg)\d W_j(s) .
\end{align}
Then, 
\begin{align} \label{Estimate-I2}
{\mathbb E}\|\mathcal{G}_\tau(t_n) \|^2
&=
\sum_{j=1}^\infty {\mathbb E}\bigg|\sum_{i=1}^n\int_{t_{i-1}}^{t_i}\bigg(\frac{1}{\tau}\int_{t_{i-1}}^{t_i} \Big(E^{(\tau)}_j(t_n-s)-E^{(\tau)}_j(t_n-\xi)\Big) \d\xi\bigg) \d W_j(s)  \bigg|^2 \nonumber\\
&=
\sum_{j=1}^\infty  \sum_{i=1}^n\int_{t_{i-1}}^{t_i} \bigg|\frac{1}{\tau}\int_{t_{i-1}}^{t_i} \Big(E^{(\tau)}_j(t_n-s)-E^{(\tau)}_j(t_n-\xi)\Big) \d\xi \bigg|^2 \d s .
\end{align}
By using the expression \eqref{expr-Etau}, for $|s-\xi|\le \tau$ we have 
\begin{align}
&\Big|E^{(\tau)}_j(t_n-s)-E^{(\tau)}_j(t_n-\xi)\Big|^2 \nonumber\\
&=
\bigg|\frac{1}{2\pi\ii}\int_{\Gamma_{\theta,\kappa}^{(\tau)}} e^{z(t_n-s)}(1-e^{z(s-\xi)})\delta_\tau(e^{-z\tau})^{\alpha-1}(\delta_\tau(e^{-z\tau})^\alpha+\lambda_j)^{-1}\frac{z\tau}{e^{z\tau}-1}\d z \bigg|^2 \nonumber\\
&\le
C\bigg(\int_{\Gamma_{\theta,\kappa}^{(\tau)}}|\d z| \bigg)\bigg(\int_{\Gamma_{\theta,\kappa}^{(\tau)}}|e^{z(t_n-s)}|^2 |1-e^{z(s-\xi)}|^2\bigg|\frac{\delta_\tau(e^{-z\tau})^{\alpha-1}}{\delta_\tau(e^{-z\tau})^\alpha+\lambda_j}\bigg|^2\bigg|\frac{z\tau}{e^{z\tau}-1}\bigg| |\d z| \bigg) \nonumber \\
&\le
C\tau^{-1}\int_{\Gamma_{\theta,\kappa}^{(\tau)}} |e^{z(t_n-s)}|^2 \tau^2|z|^2 \bigg|\frac{\delta_\tau(e^{-z\tau})^{\alpha-1}}{\delta_\tau(e^{-z\tau})^\alpha+\lambda_j}\bigg|^2 |\d z| \nonumber \\
&\le
C\tau \int_{\Gamma_{\theta,\kappa}^{(\tau)}} |e^{z(t_n-s)}|^2 |z|^2 \bigg(\frac{|z|^{\alpha-1}}{|z|^\alpha+\lambda_j} \bigg)^2 |\d z| 
\quad\mbox{(here we use Lemma \ref{ineq-sum})}, \nonumber \\
\end{align}
where we have used $ |1-e^{z(s-\xi)}|\le C|z(s-\xi)|\le C|z|\tau$ and \eqref{ztau-eztau} in deriving the second to last inequality. 
Substituting the last inequality into \eqref{Estimate-I2} yields 
 \begin{align}\label{E-Gtau}
{\mathbb E}\|\mathcal{G}_\tau(t_n) \|^2 
&\le
\sum_{j=1}^\infty C\tau\int_0^{t_n} \int_{\Gamma_{\theta,\kappa}^{(\tau)}} |e^{z(t_n-s)}|^2 |z|^2 \bigg(\frac{|z|^{\alpha-1}}{|z|^\alpha+\lambda_j} \bigg)^2 |\d z| \d s \nonumber\\
&\le
C\tau\int_0^{t_n}  \int_{\Gamma_{\theta,\kappa}^{(\tau)}} |e^{z(t_n-s)}|^2 |z|^2 \sum_{j=1}^\infty \bigg(\frac{|z|^{\alpha-1}}{|z|^\alpha+\lambda_j} \bigg)^2 |\d z| \d s \nonumber\\
&\le
C\tau\int_0^{t_n} \int_{\Gamma_{\theta,\kappa}^{(\tau)}} |e^{z(t_n-s)}|^2 |z|^{\alpha d/2} |\d z| \d s \nonumber\\
&=
C\tau\int_0^{t_n} \int_{\Gamma_{\theta,\kappa}^{(\tau)}} |e^{zs}|^2 |z|^{\alpha d/2} |\d z| \d s
\quad\mbox{(here we use a change of variable)} \nonumber\\
&\le
C\tau^{1-\alpha d/2},
\end{align}
where the last inequality can be estimated in the same way as \eqref{E-Etau}. 
\qed

\section{Numerical examples}\label{Numerical examples}
\setcounter{equation}{0}
In this section, we present three numerical examples to illustrate the theoretical analyses. 

\vspace{0.1in}
{\bf Example 1.} We first consider the one-dimensional stochastic time-fractional equation
\begin{align}\label{eq-alpha}
\partial_t u(x,t)-\partial^2_{x}\partial_t^{1-\alpha} u(x,t)= f(x,t)+\varepsilon\dot W(x,t)  \end{align}
for $0\le x\le 1$, $0<t\le 1$, with homogenous Dirichlet boundary condition and $0$ initial condition. In the above equation, 
\begin{align*}
f(x,t)=2tx^2(1-x)^2-\frac{2t^{1+\alpha}}{\Gamma(2+\alpha)}(2-12x+12x^2),
\end{align*}
$\varepsilon$ is a given constant, and $W$ the cylindrical Wiener process. 
In the absence of white noise, the exact solution would be $u_d(x,t)=t^2x^2(1-x)^2$, which corresponds to the exact mean of the stochastic solution. 

We discretize the problem \eqref{eq-alpha} in time by using the scheme \eqref{CQ-scheme2} and, in space, by continuous piecewise linear finite element method. 
Here, $h=1/M$ denotes the spatial mesh size and $U^n(x)$ the numerical solution of the fully discrete scheme. We take $\tau=h=2^{-5}$ and $\varepsilon=0.1$. For each computation, $I=1000$ independent realizations are performed with different Wiener processes. For each realization $\omega_i$, $i=1,\dots,I$, we generate $M$ independent Brownian motions $W_j(t)$, $j=1,\dots,M$. In Figure 4.1(left), we present the exact solution $u_d$ of the deterministic problem, the mean  value of numerical solutions for \eqref{eq-alpha}, and the standard deviation, respectively, at $t_n=1$. 
Moreover, the numerical approximations $U^n(x,\omega_i)$, $i=1,2,3$ of $u(x,t_n,\omega_i)$, with three independent realizations, are given in Figure 4.1(right) at $t_n=1$. The numerical simulations in Figure 4.1 are performed by taking $\alpha=0.5$. Similar results are shown in Figure 4.2 for $\alpha=1.3$. 
Because the solution has $C^{\min(\frac{1}{\alpha}-\frac12,1)}(\overline\Omega)$ pathwise regularity in space (cf. \cite[Proposition 2]{MijenaNane}), the numerical solution for $\alpha=0.5$ (the solution is $C^{1}(\overline\Omega)$) is smoother than the numerical solution for $\alpha=1.3$ (the solution is $C^{0.27}(\overline\Omega)$); see  Figures \ref{fig41} and \ref{fig42} for a visual comparison.

\begin{figure}[htp]
\begin{center}
\begin{tabular}{c}
\includegraphics[width=2.5in]{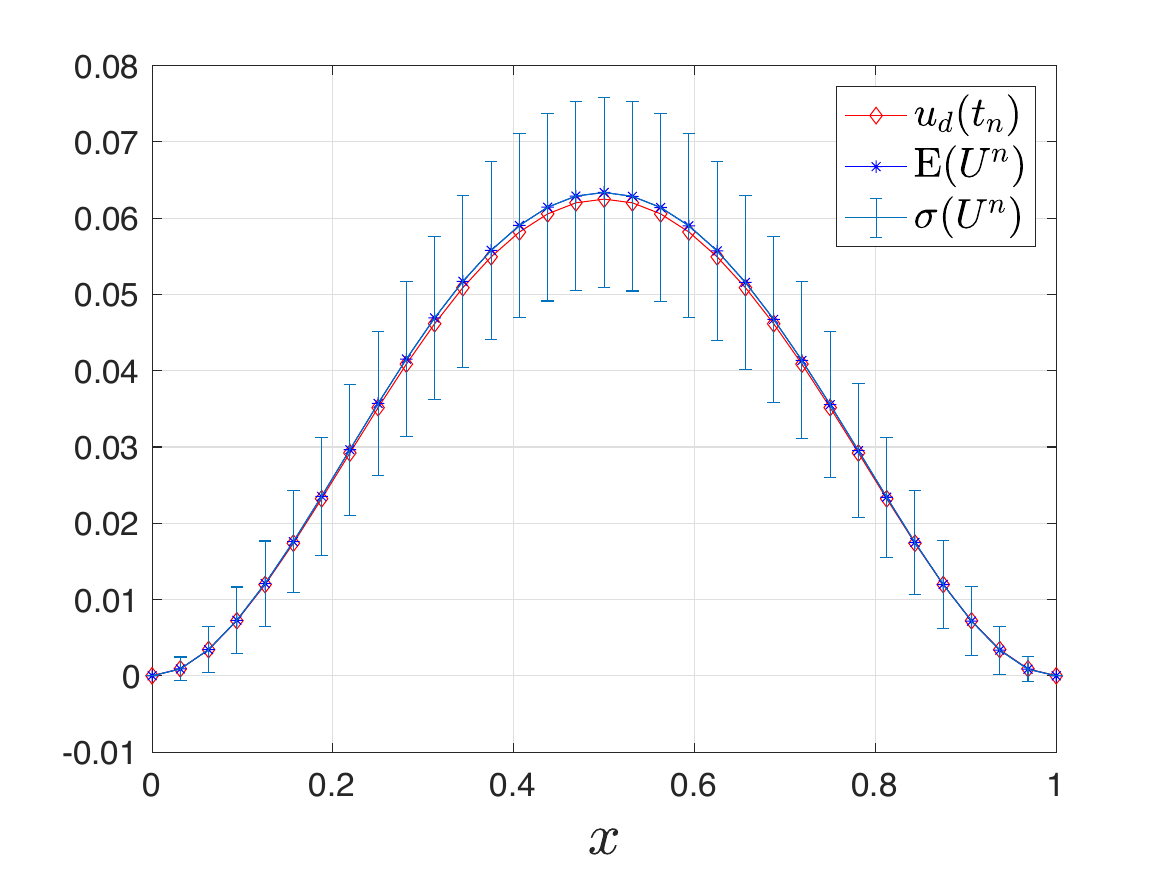}
\includegraphics[width=2.5in]{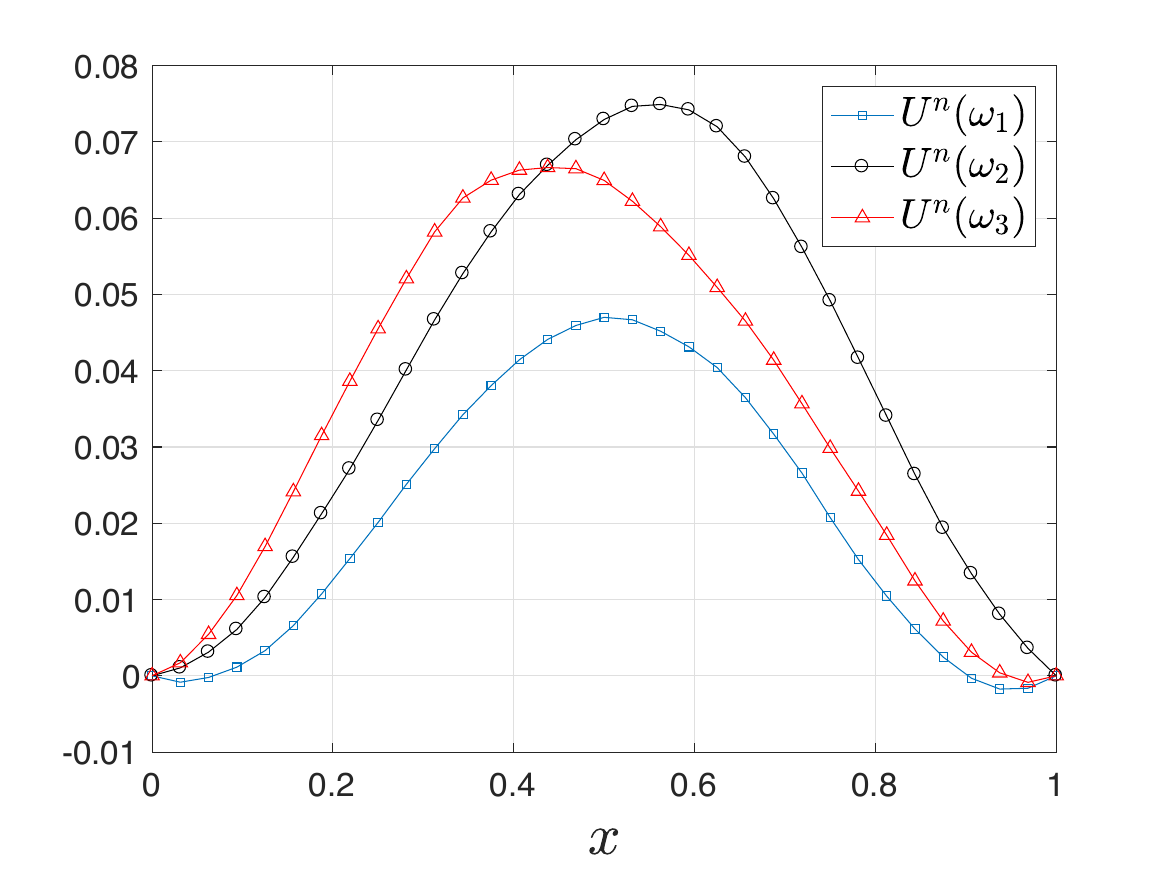}
\end{tabular}
\end{center}
\caption{Numerical approximations for $u(x,t)$ with $\alpha=0.5$}\label{fig41}
\end{figure}

\begin{figure}[htp]
\begin{center}
\begin{tabular}{c}
\includegraphics[width=2.5in]{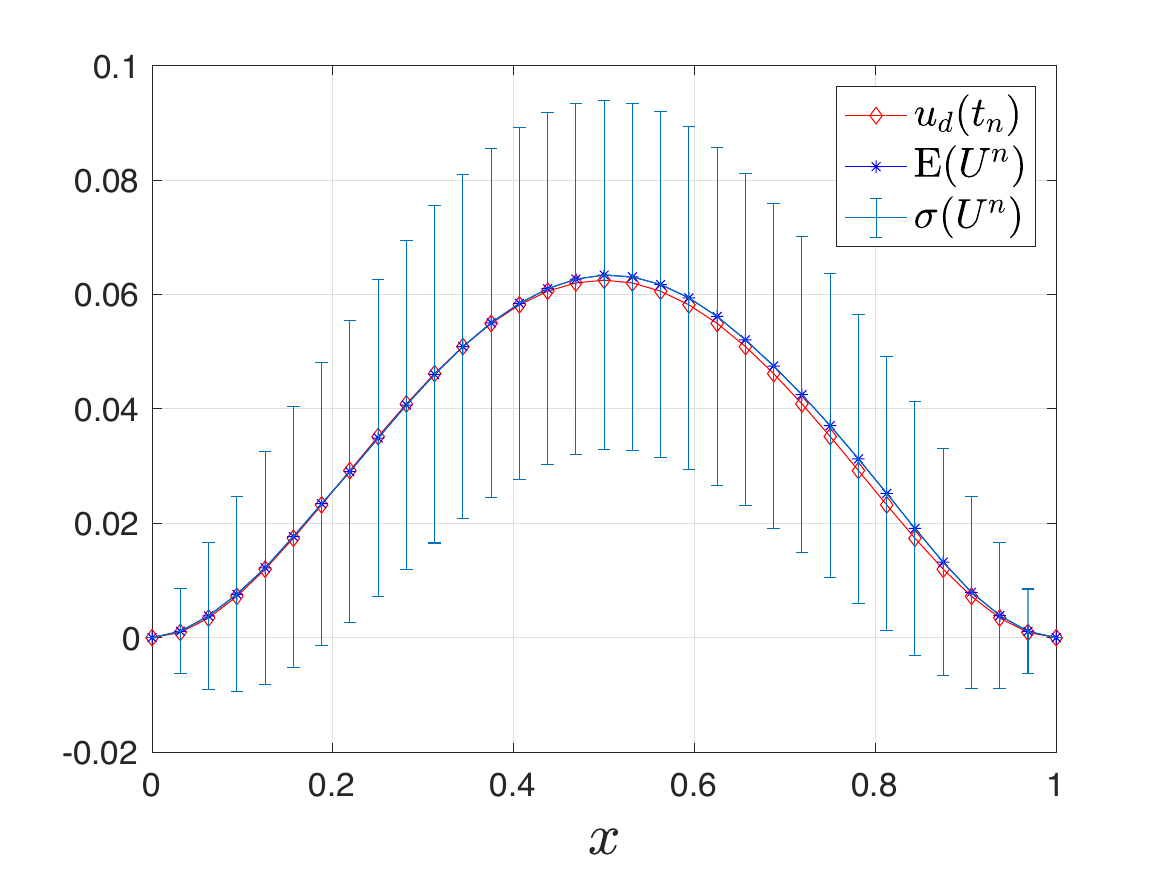}
\includegraphics[width=2.5in]{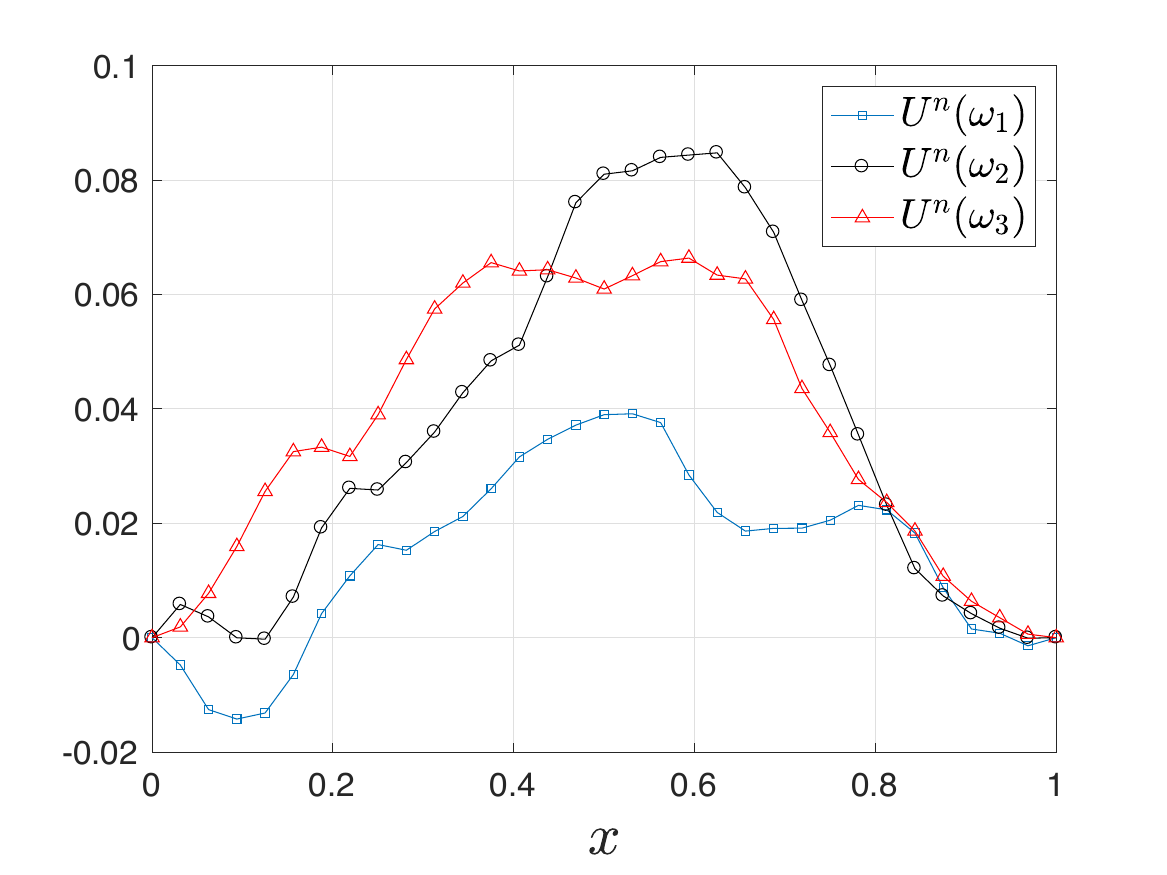}
\end{tabular}
\end{center}
\caption{Numerical approximations for $u(x,t)$ with $\alpha=1.3$}\label{fig42}
\end{figure}

\vspace{0.1in}

{\bf Example 2.} We next consider the convergence rate of the numerical scheme \eqref{CQ-scheme2} for \eqref{eq-alpha} with $\varepsilon=1$. The problem \eqref{eq-alpha} is discretized using backward-Euler convolution quadrature and a linear Galerkin finite element method. To investigate the convergence rate, we consider $I=1000$ independent realizations for each time step $\tau_k=2^{-k}$, $k=5,\dots,8$. 
In order to focus on the time discretization error, we solve the time-discrete stochastic PDE \eqref{CQ-scheme2} using a sufficiently small spatial mesh size $h=1/M=2^{-9}$ so that the spatial discretization error is relatively negligible. 
Then the error $E(\tau_k)$ is computed by  
\begin{align}\label{t-order}
E(\tau_k)=\bigg(\frac{1}{I}\sum_{i=1}^I \|U^{N,\tau_k}(\cdot,\omega_i)-U^{N,\tau_{k-1}}(\cdot,\omega_i)\|^2\bigg)^{\frac{1}{2}}
\end{align}for $k=6,7,8$.

In \cite{LubichSloanThomee:1996}, it is proved that  the backward-Euler convolution quadrature for time-fractional PDE \eqref{Deter-SPDE2} is first-order convergent. 
Thus, by Theorem \ref{MainTHM}, the convergence order of the scheme \eqref{CQ-scheme2} for problem \eqref{eq-alpha} should be $O(\tau^{\frac{1}{2}-\frac{\alpha}{4}})$ in a one-dimensional spatial domain. Consequently, we expect the error $E(\tau_k)$ to have the convergence rate
\begin{align}\label{time-order}
\log_2\frac{E(\tau_k)}{E(\tau_{k+1})} 
\approx 
\log_2\bigg(\frac{\tau_k}{\tau_{k+1}}\bigg)^{\frac{1}{2}-\frac{\alpha}{4}}=\frac{1}{2}-\frac{\alpha}{4}
\end{align}
for successive halvings of the time step. We test the above result by taking $\alpha=0.5$ and $0.9$ for a subdiffusion setting and $\alpha=1.3$ and $1.7$ for a diffusion-wave setting. From \eqref{time-order}, $\log_2\frac{E(\tau_k)}{E(\tau_{k+1})}\approx 0.375$ for $\alpha=0.5$,  
$\log_2\frac{E(\tau_k)}{E(\tau_{k+1})}\approx 0.275$ for $\alpha=0.9$,   $\log_2\frac{E(\tau_k)}{E(\tau_{k+1})} \approx 0.175$ for $\alpha=1.3$, 
and $\log_2\frac{E(\tau_k)}{E(\tau_{k+1})} \approx 0.075$ for $\alpha=1.7$. 
Clearly, the results in Table \ref{time-error} ($t_n=1$) illustrate the sharp convergence rate.

\begin{table}[!ht]
\begin{center}
\caption{$E(\tau_k)$ and convergence rates in 1D.}
\label{time-error}
\begin{tabular}{c|ccc|c}
\hline
\hline
\ {$\alpha\backslash \tau_k$}&$2^{-6}$&$2^{-7}$&$2^{-8}$&order\  \\
\hline
\ $\alpha=0.5$ &1.075e-02&8.284e-03&6.382e-03&0.376 (0.375)\ \\
\ $\alpha=0.9$ &2.825e-02&2.340e-02&1.921e-02&0.278 (0.275)\ \\
\ $\alpha=1.3$ &6.340e-02&5.654e-02&5.004e-02&0.171 (0.175)\ \\
\ $\alpha=1.7$ &1.415e-01&1.352e-01&1.275e-01&0.075 (0.075)\ \\
\hline
\hline
\end{tabular} 
\end{center}
\end{table}

\vspace{0.1in}
{\bf Example 3.} Lastly, we consider the stochastic time-fractional equation 
\begin{align}\label{eq-alpha-2}
\partial_t u(x,t)-\Delta\partial_t^{1-\alpha} u(x,t)= f(x,t)+\dot W(x,t)  
\end{align}
in the two-dimensional spatial domain $[0,1]\times[0,1]$, with homogenous Dirichlet boundary condition and $0$ initial condition. Here, we choose $0<t\le 1$ and 
\begin{align*}
f(x,t)
&=2tx_1^2x_2^2(1-x_1)^2(1-x_2)^2
-\frac{2t^{1+\alpha}}{\Gamma(2+\alpha)}(2-12x_1+12x_1^2)x_2^2(1-x_2)^2 \\
&\quad -\frac{2t^{1+\alpha}}{\Gamma(2+\alpha)}x_1^2(1-x_1)^2(2-12x_2+12x_2^2)
\end{align*}
for $x=(x_1,x_2)\in[0,1]\times[0,1]$. 
The exact solution of the corresponding deterministic problem is $u_d(x,t)=t^2 x_1^2x_2^2(1-x_1)^2(1-x_2)^2$.

We solve the stochastic equation \eqref{eq-alpha-2} using the backward-Euler scheme \eqref{CQ-scheme2}, where spatial discretization is effected by the standard piecewise linear Galerkin finite element method. 
A uniform triangular partition with 50 nodes in each direction is used. 
Similarly, we choose $\tau_k=2^{-k}$, $k=4,\dots,7$, and consider $I=1000$ independent realizations to investigate the temporal convergence rate. 
For each realization $\omega_i$, $i=1,\dots,I$, we generate $2500$ independent Brownian motions $W_j(t)$, $j=1,\dots,2500$.
The mesh size $h=\frac{\sqrt{2}}{50}$ is fixed so that spatial error is relatively negligible. 
Then, the error \eqref{t-order} is computed for each fixed time step $\tau_k$ and presented in Table \ref{time-error-2} at $t_n=1$. By Theorem \ref{MainTHM}, the convergence rate of the scheme \eqref{CQ-scheme2} for problem \eqref{eq-alpha-2} is $O(\tau^{\frac{1}{2}-\frac{\alpha}{2}})$ in the two spatial dimensional setting. 
Clearly, the numerical results are consistent with the theoretical analyses given in 
Theorem \ref{MainTHM}.

\begin{table}[!ht]
\begin{center}
\caption{$E(\tau_k)$ and convergence rates in 2D.}
\label{time-error-2}
\begin{tabular}{c|ccc|c}
\hline
\hline
\ {$\alpha\backslash \tau_k$}&$2^{-5}$&$2^{-6}$&$2^{-7}$&order\  \\
\hline
\ $\alpha=0.3$ &3.848e-03&2.941e-03&2.335e-03&0.361 (0.35)\ \\
\ $\alpha=0.5$ &9.992e-03&8.556e-03&7.395e-03&0.217 (0.25)\ \\
\ $\alpha=0.7$ &2.115e-02&1.936e-02&1.762e-02&0.132 (0.15)\ \\
\ $\alpha=0.9$ &3.997e-02&3.914e-02&3.851e-02&0.033 (0.05)\ \\
\hline
\hline
\end{tabular} 
\end{center}
\end{table}

\section{Conclusion}
We considered the stability and convergence of numerical approximations of a stochastic time-fractional  PDE by using the backward Euler convolution quadrature in time. By means of a discrete analogue of the inverse Laplace transform, we derived an integral representation of the numerical solution which was then used to prove the sharp convergence rate of the numerical approximation.

Instead of the contour 
$$\Gamma_{\theta}^{(\tau)}=\{z\in\mathbb{C}:|{\rm arg}(z)|=\theta,\,\,|{\rm Im}(z)|\le \pi/\tau\}$$
used in \cite{LubichSloanThomee:1996}, we have used the contour $\Gamma_{\theta,\kappa}^{(\tau)}$ given in \eqref{trunc-contour} for the analysis in this paper. The contour $\Gamma_{\theta,\kappa}^{(\tau)}$ excludes the origin and thus can be used not only for the Dirichlet Laplacian but also for the Neumann Laplacian (whose spectrum includes the origin). Similarly, for the Dirichlet Laplacian, it is not necessary to show that $\delta_\tau(e^{-z\tau})\in \Sigma_\theta$ as in \eqref{angle-delta}. Using this result, the analyses in this paper can be naturally extended to the Neumann Laplacian.

The paper focuses on semidiscretization in time by convolution quadrature. The main contribution of the paper is to show the possibility of removing the $\epsilon$ term in the previously obtained error estimate \eqref{Estimate-epsilon} and to establish a foundation for the further analysis of spatial discretization considered in \cite{GLW-2017}. Our analysis is based on specific growth properties of the eigenvalues of the Laplacian operator (cf. Lemma \ref{ineq-eigen}), and thus may not be directly extended to more general abstract operators such as semilinear problems and multiplicative space-time white noises. For example, for the multiplicative noise case, the identities \eqref{u-repr-}-\eqref{expr-Etau} do not hold and thus the analysis becomes more complicated. Thus, extension to semilinear problems and multiplicative space-time white noises remains open and certainly should be a subject of future research.

Theorem \ref{MainTHM} can be extended to higher order moments by using the Burkh\"older-Davis-Gundy inequality (cf. \cite[(1.1)]{NNeerven-Veraar-Weis-2007}, \cite[(6.29)]{PratoZabczyk2014}, or \cite{Pratelli-1988})$:$  
for all $p\in(1,\infty)$ there exists $C_p >0$ such that 
$$
\mathbb{E} \bigg(\max_{1\le n\le N} \bigg\|\int_0^{t_n}\phi(s) \,\d W(s)\bigg\|^p \bigg)
\le C_p\, \mathbb{E}\bigg[ \bigg(\int_0^T \|\phi(s)\|_{L_2^0}^2 \,\d s\bigg)^{\frac{p}{2}}\bigg] ,
$$
where $L_2^0$ denotes the space of Hilbert-Schmidt operators on $L^2(\mathcal{O})$. 
Let 
\begin{align} 
&H^{(\tau)}(t)  :=  \frac{1}{2\pi i}\int_{\Gamma_{\theta,\kappa}\backslash\Gamma_{\theta,\kappa}^{(\tau)}} e^{zt} z^{\alpha-1}(z^\alpha  - \Delta )^{-1}  \d z  ,
\end{align} 
which is a Hilbert-Schmidt operator satisfying (see \eqref{def-Hjt})
$$\|H^{(\tau)}(t)\|_{L^0_2}^2=\sum_{j=1}^\infty |H^{(\tau)}_j (t)|^2 . $$
By using the definition of $\mathcal{H}_\tau(t_n)$ in \eqref{error}, we have 
\begin{align}
{\mathbb E}\, (\max_{1\le n\le N} \|{\mathcal H}_\tau(t_n) \|^p)
&=
{\mathbb E}\bigg(\max_{1\le n\le N} \bigg\|\int_0^{t_n} H^{(\tau)}(t_n-s)  \d W(s) \bigg\|^p\bigg) \nonumber \\
&\le 
C_p\, {\mathbb E} \bigg[\bigg(\int_0^T \|H^{(\tau)}(t_n-s) \|_{L^0_2}^2 \d s \bigg)^{\frac{p}{2}}\bigg] 
\quad\mbox{(Burkh\"older inequality)} \nonumber \\
&= 
C_p\, {\mathbb E} \bigg[\bigg(\int_0^T \sum_{j=1}^\infty |H^{(\tau)}_j (t_n-s)|^2 \d s \bigg)^{\frac{p}{2}}\bigg] \nonumber \\
&=C_p\,  \bigg(\int_0^T  \sum_{j=1}^\infty |H_j^{(\tau)}(t_n-s) |^2 \d s \bigg)^{\frac{p}{2}}
\nonumber \\
&\le C_p\,  \tau^{(\frac12-\frac{\alpha d}{4})p} ,
\end{align} 
where the last inequality utilizes the result of \eqref{E-Htau}. 
Similarly, the following estimates can be proved by using \eqref{esti-f} and \eqref{E-Gtau}: 
$$
{\mathbb E} (\max_{1\le n\le N}\|{\mathcal E}_\tau(t_n)\|^p)
\le C_p\,  \tau^{(\frac12-\frac{\alpha d}{4})p}
\quad\mbox{and}\quad
{\mathbb E} (\max_{1\le n\le N}\|{\mathcal G}_\tau(t_n)\|^p)
\le C_p\,  \tau^{(\frac12-\frac{\alpha d}{4})p} . 
$$ 
Substituting these estimates into \eqref{error} yields the $p$-moment estimate: 
\begin{align}\label{main-estimate-p}
 \big({\mathbb E}\,\max_{1\le n\le N}\|u(\cdot,t_n)-u_n\|^p\big)^{\frac1p}
\le C_p\, \tau^{\frac12-\frac{\alpha d}{4}} ,\quad\forall\,p\in(1,\infty) .
\end{align}
The estimate above also implies the existence of a random variable $\mathcal{C}$ having finite moments of any order independent of $\tau$ such that the following pathwise estimate holds: 
\begin{align}\label{main-estimate-path}
\max_{1\le n\le N}\|u(\cdot,t_n)-u_n\|
\le \mathcal{C} \, \tau^{\frac12-\frac{\alpha d}{4}} .
\end{align}

The convergence rate proved in this article is optimal with respect to the regularity of the solution in time. However, whether it is the highest possible convergence rate among all possible numerical methods is unknown. For stochastic ODEs, the convergence rates of some numerical methods (such as Milstein's method) may be higher than the regularity of the solution in time. But Milstein's method does not yield higher convergence rate for the stochastic PDE problem considered in this article. For example, in the case $\alpha=d=1$, Milstein's method (equivalent to Euler Maruyama with additive noise) 
\begin{equation}
u_n= u_{n-1} - \tau \Delta  u_{n-1}  
+ W(\cdot,t_n)-W(\cdot,t_{n-1}) 
\end{equation}
does not converge at all (as it is an explicit scheme, which requires a CFL condition that cannot be satisfied by a semi-discretization in time). Even if we modify the Milstein's method to be an implicit scheme 
\begin{equation}\label{euler-un-w}
u_n= u_{n-1} - \tau \Delta  u_{n}  
+ W(\cdot,t_n)-W(\cdot,t_{n-1}) ,
\end{equation}
the scheme only has strong convergence rate $O(\tau^{1/4})$. This is different from stochastic ODEs, for which the strong convergence rates of Milstein's method and the corresponding implicit scheme are $O(\tau)$ (higher than the temporal regularity of the solution). This difference between stochastic PDEs and stochastic ODEs is due to the fact that $\Delta u$ does not have the same temporal regularity as $u$ when the PDE is driven by a space-time white noise \eqref{white-noise}. In particular, $\Delta u$ is not H\"older continuous in time in the space $L^2(\mathcal{O})$ (but Milstein's method requires H\"older continuity of $\Delta u$ to achieve a higher convergence rate). 
In the case of colored noise (instead of space-time white noise), the implicit Euler scheme \eqref{euler-un-w} can achieve a better convergence rate than the temporal H\"older regularity of the solution; see \cite{Wang-2017}. 



\appendix\label{append}
\section{The mild solution in $C^\gamma([0,T];L^2(\Omega;L^2(\mathcal{O})))$}\label{Append}
In the cases $\alpha<\min(1,2/d)$ and $\alpha>1$ with $d=1$, the mild solution of \eqref{Frac-SPDE2} (with space-time white noise) has been studied in different function spaces under different settings. For example, see \cite{KP,MijenaNane}. For the reader's convenience, in this appendix, we illustrate that the mild solution given by \eqref{Mild-sol-}-\eqref{Mild-sol} is indeed well defined in $C^\gamma([0,T];L^2(\Omega;L^2(\mathcal{O})))$, a result used for the numerical analysis in this paper.

\begin{theorem}
The mild solution defined by \eqref{Mild-sol-}-\eqref{Mild-sol} is in $C^{\gamma}([0,T];L^2(\Omega;L^2(\mathcal{O})))$ for arbitrary $\gamma\in(0,\frac12-\frac{\alpha d}{4})$.
\end{theorem}

\noindent{\it Proof}.$\,\,$ 
In \eqref{Mild-sol}, the formula \eqref{eqn:EF} implies
\begin{align}
\label{each-term}
\int_0^t E(t-s)\phi_j\d W_j(s)  
=
\phi_j  \int_0^t h_j(t-s) \d W_j(s)  
\end{align}
for a deterministic time-independent function $\phi_j\in L^2(\mathcal{O})$ and with the deterministic space-independent function $h_j(\cdot)$ given by 
$$
h_j(t-s) 
=\frac{1}{2\pi {\rm i}}\int_{\Gamma_{\theta,\kappa}}e^{z(t-s)} z^{\alpha-1} 
(z^\alpha +\lambda_j)^{-1} \, \d z .
$$
By the theory of the Ito integral and the identity \eqref{each-term}, each term in \eqref{Mild-sol} is well defined in $C([0,T];L^2(\Omega; L^2(\mathcal{O})))$.  
Because the one-dimensional Wiener processes $W_j(s)$, $j=1,2,\dots$, are independent of each other, it follows that 
\begin{align}\label{Est-Et-sW}
&\sup_{t\in[0,T]} {\mathbb E} \bigg\|\sum_{j=\ell}^{\ell+m} \int_0^t E(t-s)\phi_j\d W_j(s)  \bigg\|^2 \nonumber \\
&=\sup_{t\in[0,T]}  \sum_{j=\ell}^{\ell+m} \int_0^t \| E(t-s)\phi_j\|^2\d s 
=\sup_{t\in[0,T]}  \sum_{j=\ell}^{\ell+m} \int_0^t \| E(s)\phi_j\|^2\d s \nonumber \\
&\le\sum_{j=\ell}^{\ell+m} \int_0^T \| E(s)\phi_j\|^2\d s \nonumber \\
&= \int_0^{T} \sum_{j=\ell}^{\ell+m} \bigg|\frac{1}{2\pi i}\int_{\Gamma_{\theta,\kappa}} e^{zs} z^{\alpha-1}(z^\alpha  + \lambda_j )^{-1}  \d z\bigg|^2 \d s \nonumber \\
&\le
C\int_0^{T} \sum_{j=\ell}^\infty\bigg(\int_{\Gamma_{\theta,\kappa}^\theta} \frac{1}{|z|^\beta}|\d z| \bigg) 
\bigg(\int_{\Gamma_{\theta,\kappa}^\theta}  
\bigg|\frac{z^{\alpha}}{z^\alpha+\lambda_j}\bigg|^2\frac{|e^{2zs}| }{|z|^{2-\beta}} |\d z| \bigg)
\d s \nonumber \\
&\quad
+C\int_0^{T} \sum_{j=\ell}^\infty
\bigg(\int_{\Gamma_{\theta,\kappa}^\kappa} \frac{1}{|z|^\beta}|\d z| \bigg) 
\bigg(\int_{\Gamma_{\theta,\kappa}^\kappa}  
\bigg|\frac{z^{\alpha}}{z^\alpha+\lambda_j}\bigg|^2\frac{|e^{2zs}| }{|z|^{2-\beta}} |\d z| \bigg)
\d s \nonumber \\
&\le 
C\int_0^{T} \sum_{j=\ell}^\infty
\bigg(\int_{\kappa}^\infty \frac{1}{r^\beta}\d r \bigg) 
\bigg(\int_{\kappa}^\infty\bigg|\frac{r^{\alpha}}{r^\alpha+\lambda_j}\bigg|^2\frac{e^{-2rs|\cos(\theta)|}}{r^{2-\beta}} \d r \bigg)
\d s  
\quad\mbox{(use Lemma \ref{ineq-sum})} 
\nonumber \\
&\quad
+C\int_0^{T} \sum_{j=\ell}^\infty
\bigg(\int_{-\theta}^\theta  \frac{1}{\kappa^\beta} \kappa\d\varphi \bigg) 
\bigg(\int_{-\theta}^\theta   
\bigg|\frac{\kappa^{\alpha}}{\kappa^\alpha+\lambda_j}\bigg|^2\frac{e^{2\kappa s\cos(\varphi)}}{\kappa^{2-\beta}}\kappa\d\varphi \bigg)
\d s \nonumber \\
&\le 
C\kappa^{1-\beta} \int_0^{T}\int_{\kappa}^\infty \sum_{j=\ell}^\infty \bigg|\frac{r^{\alpha}}{r^\alpha+\lambda_j}\bigg|^2\frac{e^{-2rs|\cos(\theta)|}}{r^{2-\beta}} \d r  \d s   \nonumber \\
&\quad 
+C\kappa^{1-\beta} \int_0^{T}\int_{-\theta}^\theta  \sum_{j=\ell}^\infty  
\bigg|\frac{\kappa^{\alpha}}{\kappa^\alpha+\lambda_j}\bigg|^2\frac{e^{2\kappa s\cos(\varphi)}}{\kappa^{2-\beta}}\kappa\d\varphi \d s ,\nonumber \\
\end{align}
where $\beta\in(1,2-\alpha d/2)$. 

In view of Lemma \ref{ineq-sum}, 
$\displaystyle\sum_{j=1}^\infty \bigg|\frac{r^{\alpha}}{r^\alpha+\lambda_j}\bigg|^2 \le Cr^{\alpha d/2}$ implies 
$\displaystyle\sum_{j=\ell}^\infty \bigg|\frac{r^{\alpha}}{r^\alpha+\lambda_j}\bigg|^2\rightarrow 0$
as $ \ell\rightarrow \infty$ 
and  
\begin{align}\label{int-k-1}
\int_0^{T}\int_{\kappa}^\infty \sum_{j=\ell}^\infty \bigg|\frac{r^{\alpha}}{r^\alpha+\lambda_j}\bigg|^2\frac{e^{-2rs|\cos(\theta)|}}{r^{2-\beta}} \d r  \d s   
&\le \int_0^{T}\int_{\kappa}^\infty \sum_{j=1}^\infty \bigg|\frac{r^{\alpha}}{r^\alpha+\lambda_j}\bigg|^2\frac{e^{-2rs|\cos(\theta)|}}{r^{2-\beta}} \d r  \d s  \nonumber \\
&\le C\int_0^{T}\int_{\kappa}^\infty  r^{\alpha d/2} \frac{e^{-2rs|\cos(\theta)|}}{r^{2-\beta}} \d r  \d s \nonumber  \\
&\le C\int_{\kappa}^\infty  r^{\alpha d/2+\beta-3} (1-e^{-2r T|\cos(\theta)|})  \d r  \nonumber \\
&\le C\kappa^{\alpha d/2+\beta-2} . 
\end{align}
The Lebesgue dominated convergence theorem implies that 
\begin{align*}
\lim_{\ell\rightarrow \infty}
\kappa^{1-\beta} \int_0^{T}\int_{\kappa}^\infty \sum_{j=\ell}^\infty \bigg|\frac{r^{\alpha}}{r^\alpha+\lambda_j}\bigg|^2\frac{e^{-2rs|\cos(\theta)|}}{r^{2-\beta}} \d r  \d s 
=0  .
\end{align*}

Similarly, $\displaystyle\sum_{j=\ell}^\infty \bigg|\frac{\kappa^{\alpha}}{\kappa^\alpha+\lambda_j}\bigg|^2\le C\kappa^{\alpha d/2}$ implies 
$
\displaystyle\sum_{j=\ell}^\infty \bigg|\frac{\kappa^{\alpha}}{\kappa^\alpha+\lambda_j}\bigg|^2\rightarrow 0$ 
as $\ell\rightarrow \infty ,
$
and  
\begin{align}\label{int-k-2}
\int_0^{T}\int_{-\theta}^\theta  \sum_{j=\ell}^\infty  
\bigg|\frac{\kappa^{\alpha}}{\kappa^\alpha+\lambda_j}\bigg|^2\frac{e^{2\kappa s\cos(\varphi)}}{\kappa^{2-\beta}}\kappa\d\varphi \d s  
&\le  \int_0^{T}\int_{-\theta}^\theta  \sum_{j=1}^\infty  
\bigg|\frac{\kappa^{\alpha}}{\kappa^\alpha+\lambda_j}\bigg|^2
\frac{e^{2\kappa s\cos(\varphi)}}{\kappa^{2-\beta}}\kappa
\d\varphi \d s \nonumber  \\
&\le C\int_0^{T}\int_{-\theta}^\theta  \kappa^{\alpha d/2} 
\frac{e^{2\kappa s}}{\kappa^{2-\beta}} 
\kappa\d\varphi \d s \nonumber \\
&\le C\int_{-\theta}^\theta   \kappa^{\alpha d/2+\beta-2} 
(e^{2\kappa T-1})  
\d \varphi  \nonumber \\
&\le C\kappa^{\alpha d/2+\beta-2} .
\end{align}
Again, the Lebesgue dominated convergence theorem implies that 
\begin{align*}
\lim_{\ell\rightarrow \infty}
\kappa^{1-\beta} \int_0^{T}\int_{-\theta}^\theta  \sum_{j=\ell}^\infty  
\bigg|\frac{\kappa^{\alpha}}{\kappa^\alpha+\lambda_j}\bigg|^2\frac{e^{2\kappa s\cos(\varphi)}}{\kappa^{2-\beta}}\kappa\d\varphi \d s 
=0  .
\end{align*}

Overall, we have  
\begin{align*}
\sup_{t\in[0,T]} {\mathbb E}\bigg\|\sum_{j=\ell}^{\ell+m} \int_0^t E(t-s)\phi_j\d W_j(s)  \bigg\|^2 
\rightarrow 0 \quad\mbox{as}\,\,\, \ell\rightarrow \infty
\end{align*}
which implies that the sequence 
$$
\sum_{j=1}^\ell \int_0^t E(t-s)\phi_j\d W_j(s)  ,\quad \ell=1,2,\dots 
$$
is a Cauchy sequence in $C([0,T];L^2(\Omega; L^2(\mathcal{O})))$. 
Consequently, the sequence converges to a function $u\in C([0,T];L^2(\Omega; L^2(\mathcal{O})))$, which is the mild solution defined by \eqref{Mild-sol}. 

Let $L_2^0$ denote the space of Hilbert-Schmidt operators on $L^2(\mathcal{O})$ (cf. \cite[Appendix C]{PratoZabczyk2014}) with the operator norm 
\begin{align}
\|E(t-s)\|_{L_2^0} =
\bigg(\sum_{j=1}^\infty \|E(t-s)\phi_j\|_{L^2(\mathcal{O})}^2\bigg)^{\frac12} .
\end{align}
The above analysis clearly shows that 
\begin{align}
\int_0^t \|E(t-s)\|_{L_2^0}^2 \d s<\infty .
\end{align}
In view of \cite[Proposition 4.20 and page 99]{PratoZabczyk2014}, the stochastic integral \eqref{Mild-sol-} is well defined in $L^2(\Omega;L^2(\mathcal{O}))$, and \eqref{Mild-sol-} coincides with the series representation \eqref{Mild-sol} (\cite[section 4.2.2]{PratoZabczyk2014}).

Similar to the estimate \eqref{Est-Et-sW}, by considering 
\begin{align*}
&\frac{u(\cdot,t)-u(\cdot,t-h)}{h^\gamma} \\
&=\sum_{j=1}^\infty \int_0^{t-h}  \frac{E(t-s)-E(t-h-s)}{h^\gamma}\phi_j\d W_j(s) 
+\frac{1}{h^\gamma}\sum_{j=1}^\infty \int_{t-h}^t  E(t-s)\phi_j\d W_j(s)
\end{align*}
we have  
\begin{align}\label{Holder-u}
&\sup_{t\in[0,T]}{\mathbb E} \bigg\|\frac{u(\cdot,t)-u(\cdot,t-h)}{h^\gamma}\bigg\|^2 \nonumber \\
&=\sup_{t\in[0,T]} \bigg( \sum_{j=1}^{\infty}  \int_0^{t-h} \bigg\| \frac{E(t-s)-E(t-h-s)}{h^\gamma}\phi_j\bigg\|^2\d s \nonumber \\
&\quad 
+\ \frac{1}{h^{2\gamma}}\sum_{j=1}^\infty \int_{t-h}^t  \|E(t-s)\phi_j\|^2\d s  \bigg) \\
&\le\sum_{j=1}^{\infty} \int_0^T \bigg\| \frac{E(s+h)-E(s)}{h^\gamma}\phi_j\bigg\|^2\d s 
+\sum_{j=1}^\infty \frac{1}{h^{2\gamma}}\int_{0}^h  \|E(s)\phi_j\|^2\d s .\nonumber
\end{align} 
Because $\big|\frac{e^{zh}-1}{h^\gamma}\big|\le C|z|^\gamma$ on the contour $\Gamma_{\theta,\kappa}$ (on which ${\rm Re}(z)\le 0$ when $|z|\ge\kappa$), it follows that 
\begin{align*}
&\sum_{j=1}^{\infty} \int_0^T \bigg\| \frac{E(s+h)-E(s)}{h^\gamma}\phi_j\bigg\|^2\d s  \\
&= \int_0^{T} \sum_{j=1}^{\infty} \bigg|\frac{1}{2\pi i}\int_{\Gamma_{\theta,\kappa}} \frac{e^{zh}-1}{h^\gamma}e^{zs} z^{\alpha-1}(z^\alpha  + \lambda_j )^{-1}  \d z\bigg|^2 \d s \\
&\le
C\int_0^{T} \sum_{j=1}^\infty\bigg(\int_{\Gamma_{\theta,\kappa}^\theta} \frac{1}{|z|^\beta}|\d z| \bigg) 
\bigg(\int_{\Gamma_{\theta,\kappa}^\theta}  
\bigg|\frac{z^{\alpha}}{z^\alpha+\lambda_j}\bigg|^2\frac{|e^{2zs}| }{|z|^{2-\beta-2\gamma}} |\d z| \bigg)
\d s  \\
&\quad
+C\int_0^{T} \sum_{j=1}^\infty
\bigg(\int_{\Gamma_{\theta,\kappa}^\kappa} \frac{1}{|z|^\beta}|\d z| \bigg) 
\bigg(\int_{\Gamma_{\theta,\kappa}^\kappa}  
\bigg|\frac{z^{\alpha}}{z^\alpha+\lambda_j}\bigg|^2\frac{|e^{2zs}| }{|z|^{2-\beta-2\gamma}} |\d z| \bigg)
\d s  \\
&\le 
C\int_{\kappa}^\infty  r^{\alpha d/2+\beta+2\gamma-3} (1-e^{-2r T|\cos(\theta)|})  \d r 
+C\int_{-\theta}^\theta   \kappa^{\alpha d/2+\beta+2\gamma-2} 
(e^{2\kappa T-1}) \\
&\le 
C\kappa^{\alpha d/2+\beta+2\gamma-2},
\end{align*}
where the second last inequality requires $\beta>1$ for the improper integral $\int_{\Gamma_{\theta,\kappa}^\kappa} \frac{1}{|z|^\beta}|\d z|$ to be convergent (then the estimates follow similarly as for \eqref{int-k-1}-\eqref{int-k-2}), and the last inequality requires $\alpha d/2+\beta+2\gamma-3<-1$. This requires $2\gamma<1-\frac{\alpha d}{2}$. 
Also,
\begin{align*}
&\sum_{j=1}^{\infty} \frac{1}{h^{2\gamma}}\int_0^h \| E(s)\phi_j\|^2\d s  \\
&= \frac{1}{h^{2\gamma}}\int_0^{h} \sum_{j=1}^{\infty} \bigg|\frac{1}{2\pi i}\int_{\Gamma_{\theta,\kappa}}  e^{zs} z^{\alpha-1}(z^\alpha  + \lambda_j )^{-1}  \d z\bigg|^2 \d s \\
&\le
\frac{C}{h^{2\gamma}}\int_0^{h} \sum_{j=1}^\infty\bigg(\int_{\Gamma_{\theta,\kappa}^\theta} \frac{1}{|z|^\beta}|\d z| \bigg) 
\bigg(\int_{\Gamma_{\theta,\kappa}^\theta}  
\bigg|\frac{z^{\alpha}}{z^\alpha+\lambda_j}\bigg|^2\frac{|e^{2zs}| }{|z|^{2-\beta}} |\d z| \bigg)
\d s  \\
&\quad
+\frac{C}{h^{2\gamma}}\int_0^{h} \sum_{j=1}^\infty
\bigg(\int_{\Gamma_{\theta,\kappa}^\kappa} \frac{1}{|z|^\beta}|\d z| \bigg) 
\bigg(\int_{\Gamma_{\theta,\kappa}^\kappa}  
\bigg|\frac{z^{\alpha}}{z^\alpha+\lambda_j}\bigg|^2\frac{|e^{2zs}| }{|z|^{2-\beta}} |\d z| \bigg)
\d s  \\
&\le 
C\int_{\kappa}^\infty  r^{\alpha d/2+\beta-3} \frac{1-e^{-2r h|\cos(\theta)|}}{h^{2\gamma}}\d r 
+C\int_{-\theta}^\theta   \kappa^{\alpha d/2+\beta-2} 
\frac{e^{2\kappa h-1}}{h^{2\gamma}} \d\varphi \\
&\le 
C\int_{\kappa}^\infty  r^{\alpha d/2+\beta+2\gamma-3} \d r 
+C\int_{-\theta}^\theta   \kappa^{\alpha d/2+\beta+2\gamma-2} \\ 
&\le 
C\kappa^{\alpha d/2+\beta+2\gamma-2},
\end{align*}
which again requires $\beta>1$ and $2\gamma<1-\frac{\alpha d}{2}$ for the convergence of the improper integrals. 

Substituting the last two results into \eqref{Holder-u} yields that $u\in C^{\gamma}([0,T];L^2(\Omega; L^2(\mathcal{O})))$ for arbitrary $\gamma\in(0,\frac12-\frac{\alpha d}{4})$. 
\qed

\vspace{0.1in}
{\bf Acknowledgements.} 
The authors would like to thank Professor Xiaobing Feng for many valuable suggestions and comments. 
\vspace{0.1in}

{\bf Funding.} 
The research of M. Gunzburger and J. Wang was supported in part by the USA National Science Foundation grant DMS-1315259 and by the USA Air Force Office of Scientific Research grant FA9550-15-1-0001.
The work of B. Li was supported in part by the Hong Kong RGC grant 15300817.

\bibliographystyle{abbrv}

\end{document}